\theoremstyle{plain}
\newtheorem{theorem}{Theorem}[section]
\newtheorem*{theorem*}{Theorem}
\newtheorem{lemma}{Lemma}[section]
\theoremstyle{remark}
\newtheorem{remark}{Remark}[section]
\theoremstyle{definition}
\newtheorem{definition}{Definition}[section]
\newcommand{\e}{_\varepsilon}
\newcommand{\eps}{{\varepsilon}}
\newcommand{\ds}{\displaystyle}
\renewcommand{\a}{\alpha}
\renewcommand{\b}{\beta}
\renewcommand{\d}{\mathrm{d}}
\newcommand{\cupl}{\bigcup\limits}
\newcommand{\suml}{\sum\limits}
\newcommand{\intl}{\int\limits}
\newcommand{\liml}{\lim\limits}
\newcommand{\A}{\mathcal{A}}
\renewcommand{\phi}{\varphi}
\begin{document}
\clearpage

\title[Gaps in the spectrum of a periodic quantum graph]
{Gaps in the spectrum of a periodic quantum graph with periodically distributed $\delta'$-type interactions}

\author[Diana Barseghyan]{Diana Barseghyan$^{1,2}$}
\address{$^1$ Department of Mathematics, University of 
Ostrava,  70103 Ostrava, Czech Republic}
\address{$^2$ Nuclear Physics Institute ASCR, 25068 \v{R}e\v{z} near Prague, Czech Republic}
\email{diana.barseghyan@osu.cz}

\author[Andrii Khrabustovskyi]{Andrii Khrabustovskyi$^3$}
\address{$^3$ Institute of Analysis, Karlsruhe Institute of Technology, 76133 Karlsruhe, Germany} 
\email{andrii.khrabustovskyi@kit.edu}

\begin{abstract}
We consider a family of quantum graphs 
$\{(\Gamma,\mathcal{A}_\varepsilon)\}_{\varepsilon>0}$, where 
$\Gamma$ is a $\mathbb{Z}^n$-periodic metric graph and the periodic Hamiltonian $\mathcal{A}_\varepsilon$ is defined by the operation $-\varepsilon^{-1} {\mathrm{d} ^2\over \mathrm{d} x^2}$ on the edges of $\Gamma$ and either $\delta'$-type conditions or the Kirchhoff conditions at its vertices. 
Here $\varepsilon>0$ is a small parameter. 
We show that the spectrum of $\mathcal{A}_\varepsilon$ 
has at least $m$ gaps as $\varepsilon\to 0$ ($m\in\mathbb{N}$ is a predefined number), 
moreover 
the location of these gaps can be nicely controlled via a suitable choice of 
the geometry of $\Gamma$ and of coupling constants 
involved in $\delta'$-type conditions. 
\\

\noindent\textsc{Keywords and phrases:} periodic quantum graphs, $\delta'$-type interactions, spectral gaps 
\end{abstract}

\maketitle
\thispagestyle{empty}

\section{Introduction}

The name ``quantum graph'' is usually used for a pair $(\Gamma,\mathcal{A})$, 
where $\Gamma$ is a network-shaped 
structure of vertices connected by edges (``metric graph'') and $\mathcal{A}$ 
is a second order self-adjoint differential operator (``Hamiltonian''), which 
is determined by
differential operations on the edges and certain interface conditions at the 
vertices. Quantum graphs arise naturally in mathematics, physics, 
chemistry and engineering as models of wave propagation in 
quasi-one-dimensional systems looking like  
a narrow neighbourhood of a graph. One can mention, in particular, quantum wires, photonic
crystals, dynamical systems, scattering theory and many other 
applications. We refer to the recent monograph \cite{BK13} containing  a broad overview and comprehensive bibliography on this topic. 

In many applications (for instance, to graphen and carbon nano-structures -- cf. \cite{KP07,KL07}) 
periodic 
infinite graphs are studied. 
The metric graph $\Gamma$ is called  
\textit{periodic}
($\mathbb{Z}^n$-\textit{periodic}) if there is a group $G\simeq\mathbb{Z}^n$
acting isometrically, properly discontinuously and co-compactly on
$\Gamma$ (cf. \cite[Definition 4.1.1.]{BK13}). Roughly speaking $\Gamma$ is 
glued from countably many copies of a
certain compact graph $Y$ (``period cell") and each
$g\in G$ maps $Y$ to one of these copies.

In what follows in order to simplify the presentation (but without any loss of 
generality) 
we will assume that
$\Gamma$ is embedded into $\mathbb{R}^d$ with $d=3$ as $n=1,2$ and $d=n$ as 
$n\geq 3$ and is invariant under translations through linearly independent 
vectors $e_1,\dots,e_n$, i.e.
\begin{gather}\label{periodicity}
\Gamma=\Gamma+e_j,\ j=1,\dots,n.
\end{gather}
These vectors produce an action of $\mathbb{Z}^n$ on $\Gamma$.
Such an embedding can be always realized.

An example of $\mathbb{Z}^2$-periodic graph is presented on Figure \ref{fig1}, 
its period cell is highlighted in bold lines.
\begin{figure}[h]\large
\begin{center}
\scalebox{0.7}{ \includegraphics{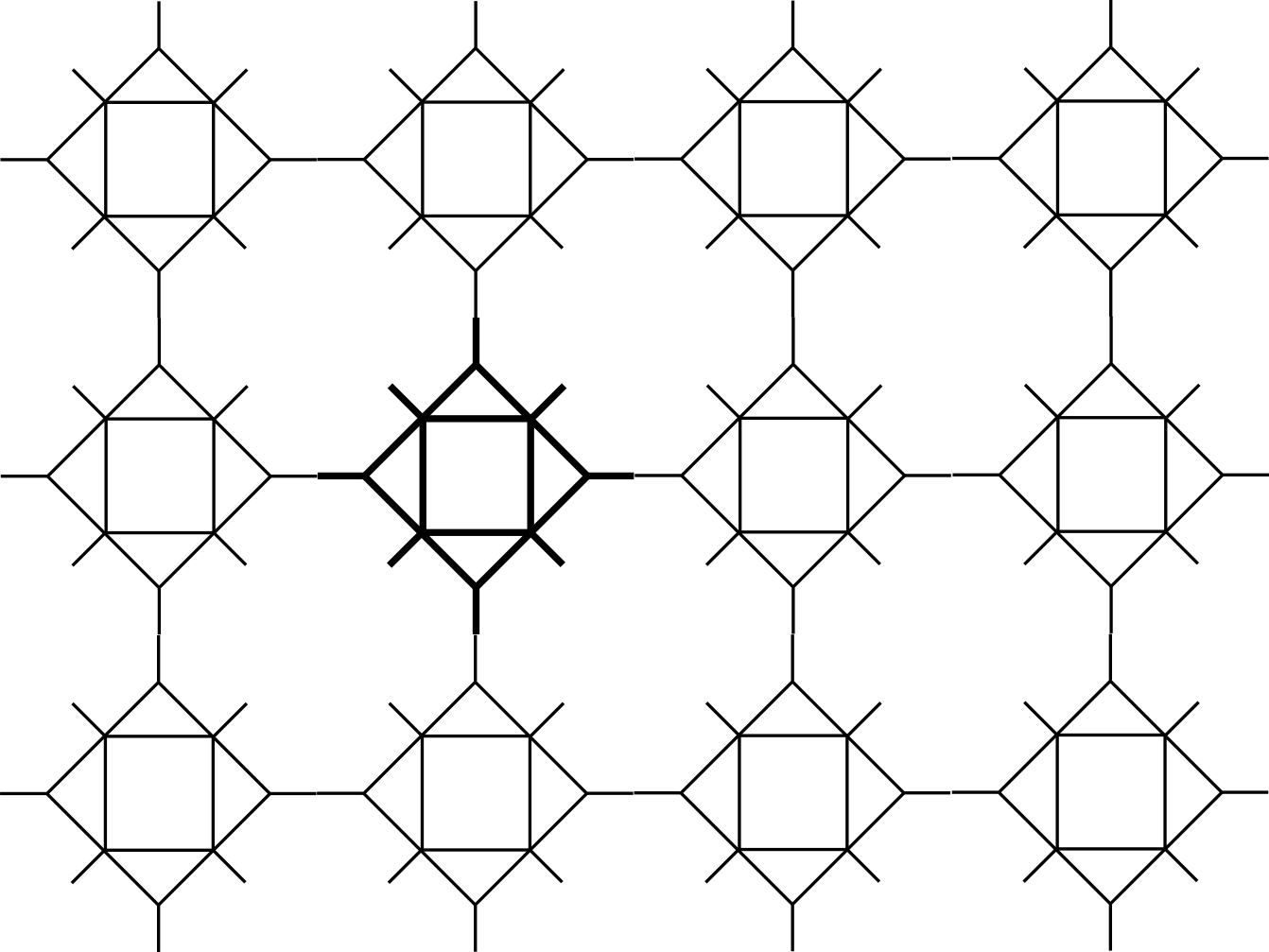}}
\caption{An example of $\mathbb{Z}^2$-periodic graph}\label{fig1}
\end{center}
\end{figure}

The Hamiltonian $\mathcal{A}$ on a periodic metric graph $\Gamma$ is said to be 
periodic if it commutes with the action of $\mathbb{Z}^n$ on $\Gamma$.
It is well-known (see, e.g., \cite[Chapter 4]{BK13}) that the
spectrum of such operators has a
band structure, i.e. it is a locally finite union of compact intervals called
\textit{bands}. In general the neighbouring bands may overlap. The bounded open 
interval $(\a,\b)\subset\mathbb{R}$ is called a \textit{gap} if it has an 
empty 
intersection with the spectrum, but its ends belong to it. In general the 
presence of
gaps in the spectrum is not guaranteed -- for example if $\Gamma$ is a
rectangular lattice and $\mathcal{A}$ is defined by the operation  
$-{\d ^2/ \d x^2}$ on its edges and the Kirchhoff 
conditions at the vertices  
then the spectrum $\sigma(\mathcal{A})$ of the operator $\mathcal{A}$ has no 
gaps, namely $\sigma(\mathcal{A})=[0,\infty)$.
Existence of spectral gaps is important because of various applications,
for example in physics of photonic crystals.

There are several ways how to create quantum waveguides with spectral gaps.
One of them is to use decorating graphs. Namely, given a fixed graph $\Gamma_0$ 
we ``decorate'' it attaching to each vertex of $\Gamma_0$ a copy of certain 
fixed graph $\Gamma_1$, the obtained graph we denote by $\Gamma$. 
Spectral properties of such graphs were studied in \cite{SA00},  
where operators defined on functions 
on vertices were considered (``discrete graphs''). The case  of quantum graphs was 
studied in 
\cite{Ku05} and it was proved that gaps open up in the spectrum of the operator 
defined by the operation $-{\d ^2/ \d x^2}$ on the edges of $\Gamma$ and the 
Kirchhoff conditions at the vertices (other conditions are also 
allowed); these gaps are located around eigenvalues of a certain Hamiltonian on 
$\Gamma_1$. 

Also one can use  ``spider decoration'' procedure:
in each vertex we disconnect the edges emerging from it
and then connect their loose endpoints by a certain additional graph (``spider''). 
Such decorating procedure was probably used  for 
the first time in \cite{AEL94,Ex95}, more results  on gap opening one can find 
in \cite{O06}.

Another way to create gaps, instead of to perturb a graph geometry, is to 
substitute the Kirchhoff conditions at the vertices by more ``advanced'' ones. 
For example, let $\Gamma$ be a
rectangular lattice and $\mathcal{A}$ be defined by the operation  
$-{\d ^2/ \d x^2}$ on its edges and $\delta$ conditions at the vertices, i.e. 
the functions from $\mathrm{dom}(\mathcal{A})$ are continuous 
at all vertices and the sum of their derivatives is proportional 
to the value of a function at the vertex with a coupling constant 
$\a\in\mathbb{R}$ (the case $\a=0$ corresponds to the Kirchhoff 
conditions). Then (cf. \cite{Ex95,Ex96}) the spectrum $\sigma(\mathcal{A})$ 
has infinitely many gaps provided $\a\not=0$ and the lattice-spacing ratio satisfies some additional 
mild assumptions. 

The goal of the current paper is to study spectral properties of some specific class of periodic quantum graphs. The main peculiarity of these graphs 
is that their spectral gaps can be nicely controlled via a suitable choice  of  the graph geometry and 
of coupling constants involved in interface conditions at its
vertices. 

\begin{figure}[h]
\begin{center}
\begin{picture}(350,100)
\scalebox{0.15}{\includegraphics{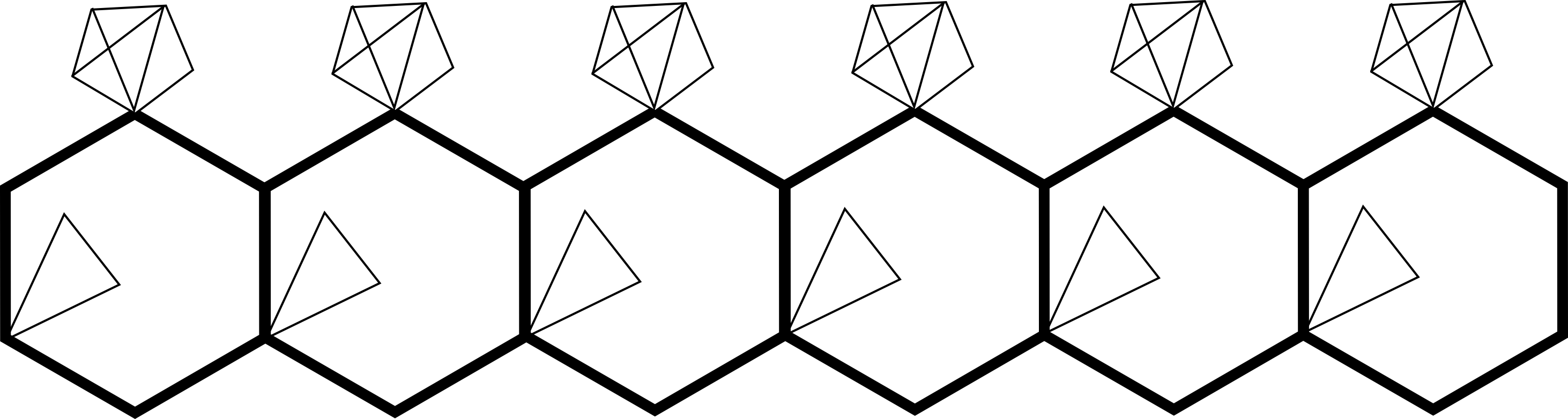}}

\put(-300,92){$Y_{i1}$}
\put(-320,52){$Y_{i2}$}
\put(-381,42){$\Gamma_0$}

\put(-300,90){\vector(-3,-2){10}}
\put(-287,90){\vector( 3,-2){10}}

\put(-319,50){\vector(-3,-2){15}}
\put(-306,50){\vector( 3,-2){22}}

\put(-369,41){\vector( 3,-1){17}}

\end{picture}\caption{The example of the graph $\Gamma$. Here 
$m=2$.}\label{fig2}
\end{center}
\end{figure}

In particular, for a given $m\in\mathbb{N}$ we construct a family 
$\left\{(\Gamma,\mathcal{A}\e)\right\}_{\eps>0}$ of periodic quantum graphs 
having at least $m$ gaps as $\eps$ is small enough and moreover the first $m$
gaps converge to predefined intervals as $\eps\to 0$. The graph $\Gamma$ is 
constructed as follows. We take an arbitrary $\mathbb{Z}^n$-periodic graph 
$\Gamma_0$  with vectors $e_1,\dots,e_n$ producing an action of $\mathbb{Z}^n$ 
on it and attach to $\Gamma_0$ a family of compact graphs 
$Y_{ij}$, $i=(i_1,\dots,i_n)\in\mathbb{Z}^n$, $j=1,\dots,m$ satisfying
$Y_{0j}+\suml_{k=1}^n i_k e_k=Y_{ij}$. We denote by  $\Gamma$ the obtained graph (an example is presented on Figure \ref{fig2})
and consider on it the Hamiltonian $\mathcal{A}\e$ defined by the operation 
$$-\ds{\eps^{-1}}{\d ^2\over \d x^2}$$ on its edges and the Kirchhoff 
conditions in 
all its vertices except the points of attachment of $Y_{ij}$ to $\Gamma_0$ -- in 
these points we pose $\delta'$-type conditions 
(in the case of vertex with two outgoing edges they coincide with the usual 
$\delta'$ conditions at a point on the line -- cf. \cite[Sec. I.4]{AGHH05}).
The required structure for the spectrum of $\mathcal{A}\e$ is achieved via a suitable choice of 
coupling constants involved in $\delta'$-type conditions  and of "sizes" of attached graphs.

\section{\label{sec1}Setting of the problem and main result}

\subsection{\label{subsec11}Graph $\Gamma$}
 
Let $$\Gamma=(\mathcal{V},\mathcal{E},\gamma,l)$$ be a connected 
$\mathbb{Z}^n$-periodic metric graph. Here 
\begin{itemize}
\item[-] by $\mathcal{V}$ we denote the set of its vertices,

\item[-] by $\mathcal{E}$ we denote the set of its edges, 

\item[-] the map $\gamma: \mathcal{E}\to \mathcal{V}\times \mathcal{V}$ assigns 
to each edge $e$ its initial and terminal points (we denote them $\gamma^-(e)$ 
and $\gamma^+(e)$, correspondingly),  

\item[-] the function $l:\mathcal{E}\to (0,\infty)$ 
assigns to the edge $e$ its length $l(e)$. 

\end{itemize}
We suppose that the degree of each vertex (i.e., the number of edges emanating from it) is finite.
In order to simplify the presentation we assume that $\Gamma$ is embedded into $\mathbb{R}^d$, where $d=3$ as 
$n=1,2$ and $d=n$ as 
$n\geq 3$.

On each edge $e\in\mathcal{E}$ we introduce the local coordinate $x_e\in 
[0,l(e)]$ in such a way that $x_e=0$ corresponds to $\gamma^-(e)$ and 
$x_e=l(e)$ 
corresponds to $\gamma^+(e)$.
One can assume that $\Gamma$ has no loops (i.e. there 
is no edge $e$ with $\gamma^+(e)=\gamma^-(e)$), otherwise one can break them 
into pieces by introducing a new intermediate vertex.
For $v\in\mathcal{V}$ we denote 
$$\mathcal{E}(v)=\left\{\text{the set of edges outgoing from }v\right\}.$$

In a natural way the function $l$ gives rise to a metric on $\Gamma$. In what 
follows by $\overset{\circ}{Z}$ (or $\mathrm{int}Z$), $\overline{Z}$, $\partial 
Z$ we denote, correspondingly, the interior, the closure, the boundary of a 
subset $Z\subset\Gamma$ with respect to this metric.
In particular, $\partial\Gamma$ consists of the vertices of $\Gamma$ of degree $1$.

The $\mathbb{Z}^n$-periodicity of $\Gamma$ means that there are linearly 
independent vectors $e_1,\dots,e_n$ satisfying \eqref{periodicity}.
By $Y$ we denote a \textit{period cell} of $\Gamma$, that is a compact subset of 
$\Gamma$
satisfying 
\begin{gather*}
\overset{\circ}{Y}\cap \left(\overset{\circ}{Y}+ \suml_{k=1}^n i_k e_k\right) 
=\varnothing\text{ for an arbitrary }i=(i_1,\dots,i_n)\in 
\mathbb{Z}^n\setminus\{0\},\\
\Gamma=\cupl_{i\in\mathbb{Z}^n} \left(Y+\suml_{k=1}^n i_k e_k\right).
\end{gather*}
We notice that period cell is not uniquely defined.  

The period cell $Y$ can be always chosen in such a way that $\partial Y$ does 
not contain any vertex $v\in\mathcal{V}\setminus\partial\Gamma$ (see Figure 
\ref{fig1}).
Under such a choice of the period cell  
the boundary $\partial Y$ of $Y$ consists of two disjoint parts $\partial_{\mathrm{int}}Y$ and $\partial_{\mathrm{ext}}Y$, where
\begin{itemize}
\item $\partial_{\mathrm{int}}Y$ consists of vertices of $\Gamma$ of degree $1$ belonging to $Y$,

\item $\partial_{\mathrm{ext}}Y$ consists of vertices of $Y$ of degree $1$ lying in the interiors of certain edges of $\Gamma$.

\end{itemize}
An example of $\mathbb{Z}^2$-periodic graph is presented on Figure \ref{fig1}. 
Its period cell $Y$ is presented in more details on Figure \ref{fig3} and 
one has
$$\partial_{\mathrm{int}}Y=\left\{v_{13},v_{14},v_{15},v_{16},\right\},\quad \partial_{\mathrm{ext}}Y=\left\{v_1,v_5,v_8,v_{11}\right\}.$$

\begin{figure}[h]\large
\begin{center}
\begin{picture}(225,230)      
\scalebox{0.7}{ \includegraphics{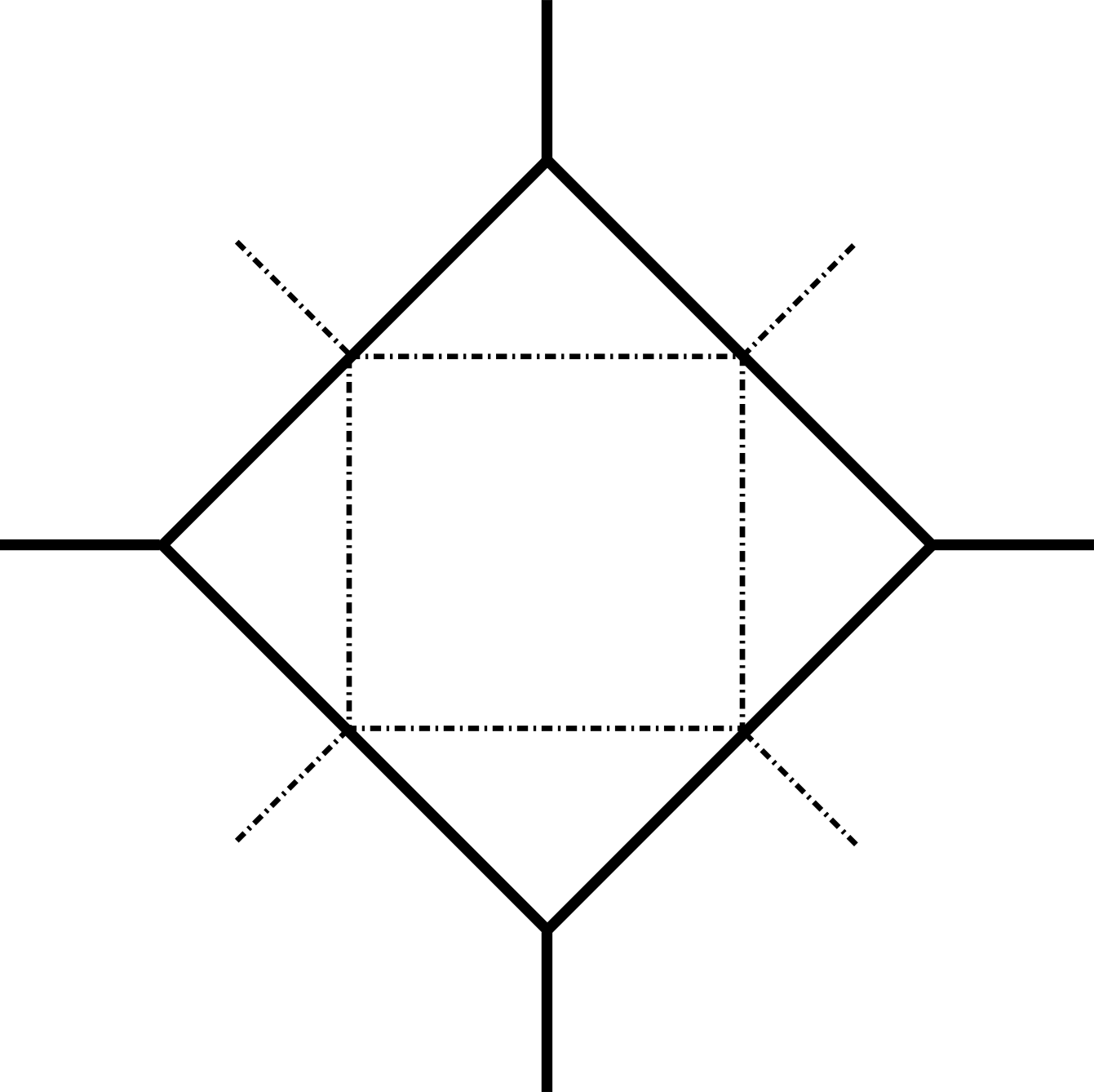}}

\put(-240,111){$v_{1}$}
\put(-185,111){$v_{2}$}
\put(-150,141){$v_{3}$}
\put(-120,175){$v_{4}$}
\put(-120,230){$v_{5}$}
\put(-87,141){$v_{6}$}
\put(-54,111){$v_{7}$}
\put(2,111){$v_{8}$}
\put(-88,81){$v_{9}$}
\put(-150,81){$v_{12}$}
\put(-120,45){$v_{10}$}
\put(-120,-10){$v_{11}$}
\put(-195,42){$v_{13}$}
\put(-195,180){$v_{14}$}
\put(-47,180){$v_{15}$}
\put(-47,42){$v_{16}$}

\put(-215,110){$^{e_{1}}$}
\put(-180,131){$^{e_{2}}$}
\put(-173,154){$^{e_{3}}$}
\put(-145,166){$^{e_{4}}$}
\put(-123,200){$^{e_{5}}$}
\put(-90,166){$^{e_{6}}$}
\put(-60,154){$^{e_{7}}$}
\put(-55,131){$^{e_{8}}$}
\put(-25,111){$^{e_{9}}$}
\put(-55,80){$^{e_{10}}$}
\put(-60,57){$^{e_{11}}$}
\put(-90,45){$^{e_{12}}$}
\put(-126,10){$^{e_{13}}$}
\put(-145,45){$^{e_{14}}$}
\put(-178,59){$^{e_{15}}$}
\put(-182,81){$^{e_{16}}$}
\put(-166,105){$^{e_{17}}$}
\put(-118,150){$^{e_{18}}$}
\put(-71,105){$^{e_{19}}$}
\put(-118,62){$^{e_{20}}$}

\end{picture}
\end{center}
\caption{\label{fig3}Period cell of the graph from Figure \ref{fig1}}
\end{figure}

Additionally, we suppose that $Y$ can be expressed as a union of $m+1$ compact
subsets, 
\begin{gather}\label{decomp}
Y=\cupl_{j=0}^m Y_j,\ m\in\mathbb{N},
\end{gather}
satisfying the following conditions:
\begin{gather}\label{conditions}
\begin{array}{ll}
\text{(i)\quad }&\overset{\circ}{Y_{j}}\not=\varnothing,\\\vspace{1pt}
\text{(ii)\quad }&Y_j\text{ are connected,}\\\vspace{1pt}\text{(iii)\quad }&
\textrm{int}(Y_j\cap  Y_{k})=\varnothing\text{ provided 
}j\not=k,\\ &\text{i.e. $Y_j$ and $Y_k$ may have only common vertices, not edges},\\\vspace{1pt}\text{(iv)\quad }&
\partial_{\mathrm{ext}}Y\subset \partial Y_0,\\\vspace{1pt}\text{(v)\quad }&
\text{the sets }
\mathcal{V}_j:=\partial Y_0\cap \partial Y_j,\ j=1,\dots, m\text{
are nonempty},\\\vspace{1pt}\text{(vi)\quad }&
\text{if }j,k\not=0\text{ and }j\not=k\text{ then either }\partial Y_j\cap\partial 
Y_k=\varnothing\text{ or }
\partial Y_j\cap\partial Y_k \subset \partial 
Y_0.
\end{array}
\end{gather}

\begin{remark}
In fact, decomposition \eqref{decomp} satisfying \eqref{conditions} is always possible for an arbitrary graph $\Gamma\ncong\mathbb{R}$ under a suitable choice of a period cell. Let us formulate this statement more accurately. 
At first we notice that for an arbitrary $s\in\mathbb{N}$ condition \eqref{periodicity} holds with $e^s_j:=s e_j$ instead of $e_j$, $j=1,\dots,n$ 
(i.e., $\Gamma$ is periodic with respect to the period cell $Y^s:=sY$). It is easy to see that if $\Gamma\ncong\mathbb{R}$ then $Y^s$ contains $m$ edges  $\tilde e_1,\dots,\tilde e_m$ satisfying
$$\tilde e_i\cap \tilde e_j=\varnothing\text{ as }i\not=j,\quad Y_0:=\overline{Y^s\setminus \cupl_{j=1}^n {\tilde e_j}}\text{ is a connected set},\quad \overset{\circ}{Y_{0}}\not=\varnothing,\quad \partial_{\mathrm{ext}}Y^s\subset \partial Y_0$$
provided $s$ is large enough. We set $Y_j:=\tilde e_j$, $j=1,\dots,m$.
Obviously, $Y^s=\cupl_{j=0}^m Y_j$ and conditions \eqref{conditions} hold true.
\end{remark}

It is easy to see that $\cupl_{j=1}^m\mathcal{V}_j\subset\overset{\circ}Y$.
One can  assume that the set 
$\cupl_{j=1}^m\mathcal{V}_j$ belongs to $\mathcal{V}$, otherwise if some of its 
points belongs to the interior of an edge then we can add it to $\mathcal{V}$  (as 
a vertex with two outgoing edges). 
Finally, for $i=(i_1,\dots,i_n)\in\mathbb{Z}^n$, $j\in\{1,\dots,m\}$ we set
$$\mathcal{V}_{ij}= \mathcal{V}_j+ \suml_{k=1}^n i_k e_k.$$
The points belonging to $\mathcal{V}_{ij}$ will support our $\delta'$-type
conditions. 

Also we will use the notation
$$Y_{ij}:=Y_j+\suml_{k=1}^n i_k e_k,\quad i=(i_1,\dots,i_n)\in\mathbb{Z}^n,\ j\in\{1,\dots,m\}.$$

Let us come back to the example depicted on Figure \ref{fig3}. There are 
several possibilities to decompose the period cell in a way described above. 
For 
example, 
one has
$Y=Y_0\cup Y_1$,  
\begin{itemize}
\item[] $Y_0$ consists of the edges $e_1,e_2,e_4,e_5,e_6,e_8,e_9,e_{10},e_{12},e_{13},e_{14},e_{16}$ (solid 
lines),

\item[] $Y_1$ consists of the edges $e_3,e_7,e_{11},e_{15},e_{17},e_{18},e_{19},e_{20}$ 
(dash-dotted lines).
\end{itemize}
The set $\mathcal{V}_1$ consists of the vertices $v_3,v_6,v_9,v_{12}$.

One can also decompose $Y$ in a more ``advanced'' way, for example as a union 
of 
six sets:
\begin{itemize}
\item[] $Y_0$ consists of the edges $e_1,e_2,e_4,e_5,e_6,e_8,e_9,e_{10},e_{12},e_{13},e_{14},e_{16}$,

\item[] $Y_1$ consists of the edges $e_{17},e_{18},e_{19},e_{20}$, 

\item[] $Y_2$ consists of the edge $e_3$,

\item[] $Y_3$ consists of the edge $e_7$,

\item[] $Y_4$ consists of the edge $e_{11}$, 

\item[] $Y_5$ consists of the edge $e_{15}$.

\end{itemize}
Then
$\mathcal{V}_1=\{v_3,v_6,v_9,v_{12}\}$,
$\mathcal{V}_2=\{v_3\},\ \mathcal{V}_3=\{v_6\},\ \mathcal{V}_4=\{v_9\},\ 
\mathcal{V}_5=\{v_{12}\}.$

\subsection{\label{subsec12}Hamiltonian $\mathcal{A}\e$}

In what follows if $u:\Gamma\to\mathbb{C}$ and 
$e\in\mathcal{E}$ then by $u_e$ we denote
the restriction of $u$ onto $\overset{\circ}e$. Via a local coordinate 
$x_e$ we identify $u_e$ with a function on $(0,l(e))$.  

We introduce several functional spaces on $\Gamma$.
The space $L_2(\Gamma)$ consists of functions that are measurable and  
square integrable on each edge and such that 
$$\|u\|^2_{L_2(\Gamma)}:=\suml_{e\in 
\mathcal{E}}\|u_e\|^2_{L_2\left(0,l(e)\right)}=\suml_{e\in 
\mathcal{E}}\intl_0^{l(e)}|u_e(x_e)|^2 \d x_e<\infty.$$

The space $\widetilde H^k(\Gamma)$, $k\in\mathbb{N}$ consists of functions on 
$\Gamma$  
belonging to the Sobolev space $H^k(e)$ on each edge $e\in\mathcal{E}$ and 
satisfying
 $$\|u\|^2_{\widetilde H^k(\Gamma)}:=\suml_{e\in 
\mathcal{E}}\|u_e\|^2_{H^k\left(0,l(e)\right)}=\suml_{e\in 
\mathcal{E}}\suml_{l=0}^k\intl_0^{l(e)}\left|\d^l u_e(x_e)\over \d 
x_e^l\right|^2 \d 
x_e<\infty.$$

Finally, the set $\mathcal{H}(\Gamma)$ consists of functions $u\in \widetilde 
H^1(\Gamma)$ satisfying the following conditions at vertices of $\Gamma$:
\begin{itemize}
\item if $v\in \mathcal{V}\setminus\left( \cupl_{i\in\mathbb{Z}^n}\cupl_{j=1}^m 
\mathcal{V}_{ij}\right)$ then $u$ is continuous at $v$, i.e. the limiting value 
of $u(x)$ when $x$ approaches $v$ along $e\in\mathcal{E}(v)$ is independent of 
$e$. We denote this value by $u(v)$;

\item if $v\in\mathcal{V}_{ij}$ for some $i=(i_1,\dots,i_n)\in\mathbb{Z}^n$, $j\in\{1,\dots,m\}$ 
 then  
\begin{itemize}

\item the limiting value of $u(x)$ when $x$ approaches $v$ along 
$e\in\mathcal{E}(v)\cap Y_{i0}$ is independent of $e$. We denote this value by 
$u_{0}(v)$;

\item the limiting value of $u(x)$ when $x$ approaches $v$ along 
$e\in\mathcal{E}(v)\cap Y_{ij}$ is independent of $e$. We denote this value by 
$u_{j}(v)$.

\end{itemize}

\end{itemize}

Now, we describe the family of operators $\mathcal{A}\e$, which will the main 
object of our
interest in this paper. 
In $L_2(\Gamma)$ we introduce the sesquilinear form $a\e$,
\begin{gather}
\notag
\mathrm{dom}(a\e)=\mathcal{H}(\Gamma),\\
\label{a}
a\e[u,w]={\eps^{-1}}\suml_{e\in\mathcal{E}}\intl_0^{l(e)} {\d u_e\over \d 
x_e}\overline{ \d w_e\over \d x_e} \d x_e
+\suml_{i\in \mathbb{Z}^n}\suml_{j=1}^m \suml_{v\in \mathcal{V}_{ij}}
q_j \left(u_{0}(v)-u_{j}(v)\right)\overline{\left(w_{0}(v)-w_{j}(v) \right)},
\end{gather}
where $q_j$  are positive constants.
The definition of $a\e[u,w]$ makes sense: the second term in the right-hand-side of \eqref{a} (we 
denote it $\tilde a[u,w]$) is finite, namely one has the estimate 
\begin{gather*}
|\tilde a[u,w]|^2\leq C \|u\|^2_{\widetilde H^1(\Gamma)} \|w\|^2_{\widetilde  
H^1(\Gamma)}
\end{gather*}
following from the standard trace inequality
$$|u(l)|^2\leq 2\left({l}^{-1}\|u\|^2_{L_2(0,l)}+l \|u'\|^2_{L_2(0,l)}\right),\ 
\forall u\in H^1(0,l).$$
Furthermore, it is straightforward to check that the form $a\e[u,v]$ is symmetric, densely defined, closed and positive. Then (see, e.g., \cite[Theorem VIII.15]{RS78})  
there exists the unique self-adjoint and positive operator $\mathcal{A}\e$ 
associated with the form $a\e$, i.e.
\begin{gather*}
(\mathcal{A}\e u,v)_{L_2(\Gamma)}= a\e[u,v],\quad\forall u\in
\mathrm{dom}(\mathcal{A}\e),\ \forall  v\in \mathrm{dom}(a\e).
\end{gather*}

The definitional domain of the operator $\mathcal{A}\e$ consists of functions 
$u\in \mathcal{H}(\Gamma)$ belonging to 
$\widetilde H^2(\Gamma)$
and satisfying the following conditions at the vertices 
(additionally to the conditions needed to be in $\mathcal{H}(\Gamma)$):
\begin{itemize}
\item if $v\in \mathcal{V}\setminus \cupl_{i\in\mathbb{Z}^n}\cupl_{j=1}^m 
\{v_{ij}\}$ then
\begin{gather*}
\suml_{e\in\mathcal{E}(v)} \left.{\d u_e\over \d 
\mathbf{x}_e}\right|_{\mathbf{x}_e=0}=0\qquad (\text{Kirchhoff conditions}),
\end{gather*}
where 
$$\mathbf{x}_e=\begin{cases}x_e,&\text{if 
}v=\gamma^-(e),\\\mathbf{x}_e=l(e)-x_e,&\text{if } v=\gamma^+(e)\end{cases}$$ 
(i.e. $\mathbf{x}_e$ is a natural coordinate on $e\in\mathcal{E}(v)$ such that $\mathbf{x}_e=0$ at $v$);

\item if $v\in\mathcal{V}_{ij}$, $i\in\mathbb{Z}^n$, $j\in\{1,\dots,m\}$ one has 
the following conditions at $v$:
\begin{gather}\label{delta'}
\begin{array}{c}
\ds-\eps^{-1}\suml_{e\in \mathcal{E}(v)\cap Y_{i0}}\left.{\d u_e\over \d 
\mathbf{x}_e}\right|_{\mathbf{x}_e=0}+q_{j}(u_0(v)-u_j(v))=0,\\
\ds-\eps^{-1}\suml_{e\in \mathcal{E}(v)\cap Y_{ij}}\left.{\d u_e\over \d 
\mathbf{x}_e}\right|_{\mathbf{x}_e=0}+q_{j}(u_j(v)-u_0(v))=0.
\end{array}\qquad\text{($\delta'$-type conditions)}
\end{gather}
\end{itemize}
The operator $\mathcal{A}\e$ acts as follows:
$$\left(\mathcal{A}\e u\right)_e= -\eps^{-1}{\d ^2 u\e\over \d x^2_e}  ,\ e\in\mathcal{E}.$$

\begin{remark}
Suppose that $v\in\mathcal{V}_{ij}$ (for some 
$i\in\mathbb{Z}^n$, $j\in\{1,\dots,m\}$) has two outgoing edges, 
$e\in\mathcal{E}(v)\cap Y_{ij}$ and $\tilde e\in\mathcal{E}(v)\cap Y_{i0}$.
Then, evidently, conditions \eqref{delta'} are equivalent to 
\begin{gather*}
\left.{\d u_e\over \d 
\mathbf{x}_e}\right|_{\mathbf{x}_e=0}+\left.{\d u_{\tilde e}\over \d 
\mathbf{x}_{\tilde e}}\right|_{\mathbf{x}_{\tilde e}=0}=0,\quad 
\kappa\ds\left.{\d u_e\over \d 
\mathbf{x}_e}\right|_{\mathbf{x}_e=0}=\left(\left. 
u_e\right|_{\mathbf{x}_e=0}-\left.u_{ \tilde e}\right|_{\mathbf{x}_{\tilde 
e}=0}\right),\ \kappa=(q_{j}\eps)^{-1},
\end{gather*}
(recall that $\mathbf{x}_e\in [0,l(e)]$ and $\mathbf{x}_{\tilde e}\in 
[0,l(\tilde e)]$ are the natural coordinates on $e$ and $\tilde e$, 
correspondingly, such that $\mathbf{x}_e=\mathbf{x}_{\tilde e}=0$ at $v$).
Thus we obtain the usual $\delta'$ conditions at a point on the line  
\cite[Sec. I.4]{AGHH05} that explains why we use the term ``$\delta'$-type 
conditions'' for \eqref{delta'}.
Various analogues of $\delta'$ conditions for graphs are discussed 
in \cite{Ex96}.
\end{remark}

\begin{remark}
The name ``$\delta'$-conditions'' is  misleading because such Hamiltonians
cannot be obtained using families of scaled zero-mean potentials (cf. \cite{S86}). 
However one can approximate them by a triple of $\delta$ potentials and then by regular $\delta$-like ones following an idea put forward in \cite{CS98} and then made mathematically rigorous in \cite{ENZ01}. The problem of approximating all singular vertex couplings (in particular, $\delta'$-type ones) in a quantum graph is solved in \cite{CET10}.
\end{remark}

\subsection{\label{subsec13}The main result}

Before to formulate the result let us introduce several notations. 

We denote 
\begin{itemize}
\item
by $l_j$, $j=0,\dots,m$  the total length of all edges belonging to $Y_j$,

\item 
by $N_j$, $j=1,\dots,m$ we denote the number of points belonging to the set $\mathcal{V}_j$.
\end{itemize}

Then for $j=1,\dots,m$ we set:
\begin{gather}\label{aj}
a_j:={N_j q_j\over l_j}.
\end{gather}
It is assumed that the numbers $a_j$ are pairwise non-equivalent. We renumber them in the 
ascending order, that is
\begin{gather}\label{aj_ord}
a_j<a_{j+1},\ \forall j=1,\dots, m -1.
\end{gather}

Finally, we consider the following equation (with unknown
$\lambda\in\mathbb{C}$):
\begin{gather}\label{mu_eq}
\mathcal{F}(\lambda):=1+\suml_{i=1}^m{a_i l_i\over l_0(a_i-\lambda)}=0.
\end{gather}
It is straightforward to show that if
\eqref{aj_ord} holds then equation \eqref{mu_eq} has exactly
$m$ roots, they are real and interlace with $a_j$. We denote
them by $b_j$, $j=1,\dots,m$ supposing that they are renumbered in
the ascending order, i.e.
\begin{gather}\label{abab}
a_j<b_j<a_{j+1},\ j={1,\dots,m-1},\quad
a_m<b_m<\infty.
\end{gather}

We are now in position to formulate the first main result of this work.

\begin{theorem}\label{th1}
Let $L>0$ be an arbitrary number. Then the spectrum of the operator 
$\mathcal{A}\e$ in $[0,L]$ has the following structure for $\eps$ small enough:
\begin{gather}\label{th1_f1}
\sigma(\mathcal{A}\e)\cap [0,L]=[0,L]\setminus
\cupl_{j=1}^m\big(a_j(\eps),b_j(\eps)\big),
\end{gather}
where the endpoints of the intervals $\big(a_j(\eps),b_j(\eps)\big)$ satisfy the 
relations
\begin{gather}\label{th1_f2}
\liml_{\eps\to 0}a_j(\eps)=a_j,\quad \liml_{\eps\to 0}b_j(\eps)=b_j,\ 
j=1,\dots,m.
\end{gather}
\end{theorem}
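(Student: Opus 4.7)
The plan is to combine the Floquet--Bloch decomposition with a singular-limit analysis of the fiber operators. Since $\mathcal{A}\e$ commutes with the $\mathbb{Z}^n$-action, one has the direct integral
\[
\mathcal{A}\e \cong \intl_{\mathcal{B}}^{\oplus} \mathcal{A}\e(\theta)\,\d\theta
\]
over the Brillouin zone $\mathcal{B}$, where $\mathcal{A}\e(\theta)$ is the self-adjoint realization on $L_2(Y)$ of the form $a\e$ restricted to functions satisfying the $\theta$-quasi-periodic identification of matching points of $\partial_{\mathrm{ext}}Y$. Each $\mathcal{A}\e(\theta)$ has compact resolvent, hence discrete eigenvalues $\lambda_1(\theta,\eps)\le\lambda_2(\theta,\eps)\le\cdots$, and by continuity in $\theta$ the bands $\sigma_n(\eps):=\{\lambda_n(\theta,\eps):\theta\in\mathcal{B}\}$ are compact intervals with $\sigma(\mathcal{A}\e)=\cupl_n\sigma_n(\eps)$.

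The heart of the argument is the small-$\eps$ identification of $\mathcal{A}\e(\theta)$. For normalized $u$ with $a\e[u,u]=\lambda\le L$, the non-negativity of the two pieces of $a\e$ yields $\|u'\|^2_{L_2(Y)}\le L\eps$ and $\suml_v q_j|u_0(v)-u_j(v)|^2\le L$. The first estimate forces $u$ to be nearly constant on each edge; continuity at Kirchhoff vertices and connectedness of each $Y_j$ then imply $u|_{Y_j}\to U_j\in\mathbb{C}$ (weakly/in trace) for $j=0,1,\ldots,m$. The candidate limit operator is therefore the finite-dimensional spectral problem on $\mathbb{C}^{m+1}$ with inner product $\suml_{j=0}^m l_j|U_j|^2$ and quadratic form $\suml_{j=1}^m N_jq_j|U_0-U_j|^2$, subject to the Floquet-induced constraint $U_0=e^{i\theta\cdot e_k}U_0$ for every generator $e_k$, which is vacuous at $\theta=0$ and forces $U_0=0$ whenever $\theta\neq 0$.

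For $\theta=0$ the unconstrained problem is diagonalized directly: the kernel is the diagonal $U_0=U_1=\cdots=U_m$, while for $\lambda\neq 0$ the $j$-th equation gives $U_j=a_jU_0/(a_j-\lambda)$, and insertion into the $U_0$-equation reduces exactly to $\mathcal{F}(\lambda)=0$, so the spectrum is $\{0,b_1,\ldots,b_m\}$. For $\theta\neq 0$, the constraint $U_0=0$ diagonalizes the form on $\mathbb{C}^m$ and yields the spectrum $\{a_1,\ldots,a_m\}$. Upper bounds on $\lambda_n(\theta,\eps)$ follow from the min-max principle with trial functions built from smoothed characteristic functions of $Y_0,Y_1,\ldots,Y_m$ (the $Y_0$-piece being dropped for $\theta\neq 0$):
\[
\lambda_n(0,\eps)\le b_{n-1}+o(1)\ \ (n\le m+1,\ b_0:=0),\qquad \lambda_n(\theta,\eps)\le a_n+o(1)\ \ (n\le m,\ \theta\neq 0).
\]
Matching lower bounds come from a compactness argument: any normalized sequence $u_\eps$ with $a\e[u_\eps,u_\eps]$ bounded satisfies $\|u_\eps'\|^2=O(\eps)$ and converges along a subsequence to a piecewise-constant $U\in\mathbb{C}^{m+1}$; a standard $\Gamma$-convergence identification equates the limit Rayleigh quotient with an eigenvalue of the limit operator. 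A Neumann-bracketing argument gives $\inf_\theta\lambda_n(\theta,\eps)\ge c/\eps\to\infty$ for all $n\ge m+2$.

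Combining these estimates with the continuity of $\theta\mapsto\lambda_n(\theta,\eps)$ yields, for $\eps$ sufficiently small,
\[
\sigma_1(\eps)=[0,a_1(\eps)],\ \sigma_{j+1}(\eps)=[b_j(\eps),a_{j+1}(\eps)]\ (j=1,\ldots,m-1),\ \sigma_{m+1}(\eps)\cap[0,L]=[b_m(\eps),L],
\]
with endpoints satisfying \eqref{th1_f2}; together with $\sigma_n(\eps)\cap[0,L]=\varnothing$ for all $n\ge m+2$ this gives \eqref{th1_f1}. The main technical obstacle is the apparent dimensional jump in the limit spectrum between $\theta=0$ (an $(m+1)$-dimensional problem) and $\theta\neq 0$ (an $m$-dimensional one): reconciling it with the continuity of the genuine $\eps$-eigenvalues requires tracking each $\theta\mapsto\lambda_n(\theta,\eps)$ as $\theta$ sweeps continuously from $0$, so that $\sigma_n(\eps)$ surjects onto its full target interval and realizes the interlacing \eqref{abab}.
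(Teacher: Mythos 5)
Your skeleton coincides with the paper's: Floquet--Bloch reduction to fiber operators on $Y$, identification of the $\eps\to 0$ limits as finite-dimensional problems on the span of the indicator functions of $Y_0,\dots,Y_m$ (an $(m+1)$-dimensional problem with spectrum $\{0,b_1,\dots,b_m\}$ at $\theta=(1,\dots,1)$, an $m$-dimensional one with spectrum $\{a_1,\dots,a_m\}$ for all other $\theta$), and the same reduction of the secular equation to $\mathcal{F}(\lambda)=0$ via $U_j=a_jU_0/(a_j-\lambda)$. Where you genuinely differ is the convergence machinery: the paper encloses every fiber eigenvalue between the Neumann and Dirichlet eigenvalues on $Y$, $\lambda_k^N(\eps)\le\lambda_k^\theta(\eps)\le\lambda_k^D(\eps)$, and computes the limits of these (and of the periodic/antiperiodic eigenvalues) from Simon's monotone-convergence theorem for forms, upgraded to norm-resolvent convergence by Kato's theorem on monotone families of compact operators; you propose direct min--max upper bounds with indicator trial functions (no smoothing is actually needed, since jumps are permitted exactly at the $\mathcal{V}_j$ vertices) and a compactness/$\Gamma$-convergence lower bound. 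Both routes are viable; the paper's buys the spectral convergence from abstract theorems, yours is more hands-on.

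The one substantive point you must still supply is uniformity in $\theta$. Pointwise convergence $\lambda_k^\theta(\eps)\to b_{k-1}$ at $\theta=(1,\dots,1)$ and $\to a_k$ at each fixed other $\theta$ controls neither $\inf_\theta\lambda_k^\theta(\eps)$ nor $\sup_\theta\lambda_k^\theta(\eps)$, and these extrema are exactly the band edges $b_{k-1}(\eps)$, $a_k(\eps)$ whose limits the theorem asserts. Note that what you call the ``main technical obstacle'' --- that each band surjects onto its target interval $[b_{k-1},a_k]$ --- is the easy inclusion: it follows from pointwise convergence at two values of $\theta$ together with connectedness of the bands. The direction that needs work is that the bands do not overshoot, i.e.\ $\liminf_{\eps\to0}\inf_\theta\lambda_k^\theta(\eps)\ge b_{k-1}$ and $\limsup_{\eps\to0}\sup_\theta\lambda_k^\theta(\eps)\le a_k$. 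Your own tools do give this once applied $\theta$-uniformly: the span of the indicators of $Y_1,\dots,Y_m$ lies in every $\theta$-fiber form domain (these functions vanish on $Y_0\supset\partial_{\mathrm{ext}}Y$) and min--max yields $\lambda_k^\theta(\eps)\le a_k$ for all $\theta$, all $\eps$ and $k\le m$; and your compactness argument applied to the form with no $\theta$-condition at all (the Neumann problem on $Y$) yields the $\theta$-independent lower bound $\lambda_k^\theta(\eps)\ge\lambda_k^N(\eps)\to b_{k-1}$. This is precisely the Dirichlet--Neumann bracketing on which the paper's proof rests; without making it explicit your argument is incomplete at the band edges. Your rate $c/\eps$ for $n\ge m+2$ is fine and also comes from the Neumann side: dropping the nonnegative $\delta'$ term leaves $\eps^{-1}$ times a decoupled Kirchhoff form on $\mathcal{H}(Y)$ whose kernel is exactly $(m+1)$-dimensional.
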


In the last section we will present our second result (Theorem \ref{th2}): we will show that under a suitable choice of the graph $\Gamma$ and the coupling constants $q_j$ the limit intervals $(a_j,b_j)$ coincide with predefined ones.

Theorem \ref{th1} will be proven in the next section. We postpone the outline of the 
proof to the end of Subsection \ref{subsec21} because we need to introduce first 
some more notations.

\section{\label{sec2} Proof of Theorem \ref{th1}}
\subsection{\label{subsec21} Preliminaries}

The Floquet-Bloch theory establishes a relationship between the spectrum of 
$\A\e$ and the spectra of appropriate operators on $Y$. Namely, let $$\theta\in 
\mathbb{T}^n=\{\theta=(\theta_1,\dots,\theta_n)\in\mathbb{C}^n,\ 
|\theta_k|=1\text{ for all }k=1,\dots,n\}.$$ 
We denote by $\mathcal{H}^\theta(\Gamma)$ the set of functions $u:\Gamma\to\mathbb{C}$ satisfying
\begin{itemize}
\item $\forall e\in\mathcal{E}$: $u_e\in H^1(e)$,

\item $u$ is continuous at all vertices belonging to $\mathcal{V}\setminus\left(\cupl_{i\in\mathbb{Z}^n}\cupl_{j=1}^m\mathcal{V}_{ij}\right)$,

\item at the vertices belonging to $\cupl_{i\in\mathbb{Z}^n}\cupl_{j=1}^m\mathcal{V}_{ij}$
$u$ satisfies the same conditions as functions from $\mathcal{H}(\Gamma)$,

\item $u$ is $\theta$-periodic, that is
$$\forall x\in\Gamma: u(x+e_k)=\theta_k u(x),\ k=1,\dots,n$$
(if $\theta=(1,1,\dots,1)$ (respectively, $\theta=-(1,1,\dots,1)$) one has periodic (respectively, antiperiodic) conditions).

\end{itemize}
Then we introduce the 
sesquilinear form $a\e^\theta$ defined as follows (below the notation $\mathcal{E}(Y)$ stays for the set of edges of $Y$): 
\begin{gather*}
\mathrm{dom}(a\e^\theta)=\left\{u=v|_Y,\ v\in\mathcal{H}^\theta(\Gamma)\right\},\\
a^\theta\e[u,w]=\eps^{-1}\suml_{e\in\mathcal{E}(Y)}\intl_0^{l(e)} {\d 
u_e\over \d x_e}\overline{ \d w_e\over \d x_e} \d x_e
+\suml_{j=1}^m \suml_{v\in \mathcal{V}_{j}}
q_j \left(u_{0}(v)-u_{j}(v)\right)\overline{\left(w_{0}(v)-w_{j}(v) \right)}.
\end{gather*}
We define $\A^\theta_{\eps}$ as the operator acting in $L_{2}(Y)$ being 
associated with the form $a^\theta_{\eps}$.
Since $Y$ is compact, $\A^\theta_{\eps}$ has a purely discrete 
spectrum. We denote by $\left\{\lambda^{\theta}_{k} 
(\eps)\right\}_{k\in\mathbb{N}}$ the sequence of eigenvalues of 
$\A^\theta_{\eps}$ arranged in the ascending order and repeated according to 
their multiplicity.

One has the following representation (see \cite[Chapter 4]{BK13}):
\begin{gather}\label{repres1}
\sigma(\A\e)=\cupl_{k=1}^\infty \cupl_{\theta\in \mathbb{T}^n}
\left\{\lambda^{\theta}_{k}(\eps)\right\}.
\end{gather}
Moreover, for any fixed $k\in\mathbb{N}$ the set 
\begin{gather}\label{repres1+}
L_k(\eps):=\cupl_{\theta\in \mathbb{T}^n} 
\left\{\lambda^{\theta}_{k}(\eps)\right\}
\end{gather}
is a compact interval ($k$-th spectral band). 

By $\mathcal{H}(Y)$ we denote the set of functions $u:Y\to\mathbb{C}$ satisfying
\begin{itemize}

\item $\forall e\in\mathcal{E}(Y)$: $u_e\in H^1(e)$,

\item $u$ is continuous at all vertices 
of $Y$ except those ones belonging to $\cupl_{j=1}^m\mathcal{V}_{j}$,

\item at the vertices from $\cupl_{j=1}^m\mathcal{V}_{j}$\hspace{2mm}
$u$ satisfies the same conditions as functions from $\mathcal{H}(\Gamma)$. 
\end{itemize}
Then we introduce the operator $\mathcal{A}\e^N$ (respectively, 
$\mathcal{A}\e^D$) 
as the operator acting in $L_2(Y)$ and  associated with the sesquilinear form
$a\e^{N}$  (respectively, $a\e^D$) defined as follows:
\begin{gather*}
\mathrm{dom}(a\e^N)=\mathcal{H}(Y),\ a\e^N[u,w]=a\e^\theta[u,w],
\\
\text{(respectively, }\mathrm{dom}(a\e^D)=\{u\in \mathcal{H}(Y):\ u=0\text{ on 
}\partial_{\mathrm{ext}}Y\},\ a\e^D[u,w]=a\e^\theta[u,w]\text{).}
\end{gather*}
The subscript $N$ (respectively, $D$) indicates that functions from 
$\mathrm{dom}(\mathcal{A}\e^N)$ (respectively, $\mathrm{dom}(\mathcal{A}\e^D)$) 
satisfy the Neumann (respectively, Dirichlet) boundary conditions on 
$\partial_{\mathrm{ext}}Y$.

The spectra of the operators $\mathcal{A}\e^N$ and $\mathcal{A}^D_{\eps}$ are 
purely discrete. We denote by $\left\{\lambda_k^N 
(\eps)\right\}_{k\in\mathbb{N}}$ (respectively, 
$\left\{\lambda_k^D(\eps)\right\}_{k\in\mathbb{N}}$) the sequence of eigenvalues 
of $\mathcal{A}^N_{\eps}$ (respectively, of $\mathcal{A}^D_{\eps}$) arranged in 
the ascending order and repeated according to their multiplicity.

Using the min-max principle and the enclosures
$$\mathrm{dom}(a\e^N) \supset \mathrm{dom}(a\e^\theta)\supset 
\mathrm{dom}(a\e^D)$$
we obtain that
\begin{gather}\label{enclosure}
\forall k\in \mathbb{N},\ \forall\theta\in\mathbb{T}^n:\quad
\lambda_k^N(\eps) \leq \lambda_k^\theta(\eps) \leq
\lambda_k^D(\eps).
\end{gather}

Finally, we present the result of B.~Simon \cite[Theorem 4.1]{S78}, 
which will be widely used 
during the proof. In order to simplify its presentation we introduce an auxiliary definition. 

\begin{definition}\label{def_res}
Let $a$ be a symmetric, closed and positive sesquilinear form in a Hilbert space $H$ with a domain 
$\mathrm{dom}(a)$, which is not necessary dense in $H$. Let $\mathcal{A}$ be a positive 
self-adjoint operator acting in the subspace $\overline{\mathrm{dom}(a)}$ of 
${H}$ and associated with the form $a$.
Then the operator $R$ defined by the formula
$$R=\begin{cases}(\mathcal{A}+I)^{-1}&\text{on 
}\overline{\mathrm{dom}(a)},\ I\text{ is the identity operator},\\
0&\text{on }{H}\ominus \overline{\mathrm{dom}(a)}
\end{cases}$$
is said to be \textit{the generalized resolvent corresponding to the form $a$}. 
\end{definition}

\begin{theorem}[B.~Simon \cite{S78}]
\label{thS}
Let 
$\{a_\eps\}_{\eps>0}$ be a family of closed positive symmetric sesquilinear forms in a Hilbert space 
${H}$, by $\{R\e\}_{\eps>0}$ we denote the corresponding family of generalized 
resolvents. Suppose that $a_\eps$ increases monotonically as $\eps$ 
decreases, i.e.
$$\text{if }\eps_1\geq\eps_2\text{\quad then\quad }\mathrm{dom}(a_{\eps_1})\supset 
\mathrm{dom}(a_{\eps_2})\text{ and }{a}_{\eps_1}[u,u]\leq {a}_{\eps_2}[u,u],\ \forall 
u\in \mathrm{dom}(a_{\eps_2}).$$
Then the positive symmetric sesquilinear form ${a}_0$ defined by
$$\mathrm{dom}(a_0):=\left\{u\in\bigcap\limits_{\eps>0}\mathrm{dom}(a_\eps):\ 
\sup\limits_{\eps>0}a_\eps[u,u]<\infty\right\},\quad 
a_0[u,v]=\liml_{\eps\to 0}a_\eps[u,v]$$
is closed, and moreover 
\begin{gather*}
\forall u\in{H}: R_\eps u \to R_0 u\text{ as }\eps\to 0,
\end{gather*}
where $R_0$ is the generalized resolvent corresponding to the form 
$a_0$. 
\end{theorem}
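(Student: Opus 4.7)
My plan is to combine the Floquet-Bloch representation \eqref{repres1} with the enclosure \eqref{enclosure} and Simon's monotone-form theorem (Theorem \ref{thS}). Each of the form families $\{a^\theta\e\}_{\eps>0}$, $\{a^N\e\}_{\eps>0}$, $\{a^D\e\}_{\eps>0}$ is monotone increasing as $\eps$ decreases on a fixed domain, because the leading term $\eps^{-1}\suml_{e\in\mathcal{E}(Y)}\|\d u_e/\d x_e\|^2_{L_2}$ grows while the other term is $\eps$-independent. Hence Theorem \ref{thS} applies to each family and yields strong convergence of the associated generalized resolvents to those corresponding to suitable limit forms $a_0^N, a_0^D, a_0^\theta$, from which I would extract pointwise convergence of the ordered eigenvalues.

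The core step is identifying these limit forms. The condition $\sup_\eps a\e[u,u]<\infty$ forces $\d u_e/\d x_e\equiv 0$ on every edge; combined with continuity at ordinary vertices and the connectedness of $Y_0$ and of each $Y_j$, admissible functions are parametrised by constants $u_0,u_1,\dots,u_m\in\mathbb{C}$, where $u_j$ is the value on $Y_j$. The limit form and $L_2$ norm then read
\[
a_0[u,u]=\suml_{j=1}^m N_j q_j\,|u_0-u_j|^2, \qquad \|u\|_{L_2(Y)}^2=l_0|u_0|^2+\suml_{j=1}^m l_j|u_j|^2.
\]
For the Dirichlet form, $u|_{\partial_{\mathrm{ext}}Y}=0$ combined with $\partial_{\mathrm{ext}}Y\subset\partial Y_0$ yields $u_0=0$, and the limit generalized eigenvalue problem becomes $\mathrm{diag}(N_jq_j)\,u=\lambda\,\mathrm{diag}(l_j)\,u$, whose eigenvalues are precisely $a_j=N_jq_j/l_j$. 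For the Neumann form $u_0$ is free; eliminating $u_j=\tfrac{a_j}{a_j-\lambda}u_0$ from the equation for $u_0$ reproduces exactly $\mathcal{F}(\lambda)=0$, whose non-zero roots are $b_1,\dots,b_m$ by \eqref{abab}, while $0$ is an eigenvalue from constants. For the Bloch form $a^\theta\e$, constancy of $u$ on $Y_0$ propagates under $\theta$-periodicity to every translate $Y_0+\suml_k i_ke_k$; since the union of these translates is connected in $\Gamma$, the relation $u_0=\theta_k u_0$ must hold for every $k=1,\dots,n$. Thus the limit form coincides with the Dirichlet one for $\theta\neq(1,\dots,1)$ and with the Neumann one for $\theta=(1,\dots,1)$.

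Combining these ingredients, Theorem \ref{thS} gives $\lambda_k^D(\eps)\to a_k$ for $1\le k\le m$ (and $\to+\infty$ thereafter), $\lambda_1^N(\eps)\to 0$, $\lambda_k^N(\eps)\to b_{k-1}$ for $2\le k\le m+1$ (and $\to+\infty$ thereafter), together with analogous statements for $\lambda_k^\theta(\eps)$. The enclosure $\lambda_k^N\le\lambda_k^\theta\le\lambda_k^D$ and the fact that $L_k(\eps)$ is the interval swept out by $\theta\mapsto\lambda_k^\theta(\eps)$ then yield $\sup L_k(\eps)\to a_k$ (upper bound from $\lambda_k^D$, lower bound from $\lambda_k^\theta$ at any $\theta\neq(1,\dots,1)$) and $\inf L_{k+1}(\eps)\to b_k$ (lower bound from $\lambda_{k+1}^N$, upper bound from $\lambda_{k+1}^{\theta=(1,\dots,1)}$). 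Defining $a_j(\eps):=\sup L_j(\eps)$ and $b_j(\eps):=\inf L_{j+1}(\eps)$, the strict interlacing $0<a_1<b_1<\dots<a_m<b_m$ ensures $a_j(\eps)<b_j(\eps)<a_{j+1}(\eps)$ for $\eps$ small, producing precisely $m$ gaps in $[0,L]$ and delivering both \eqref{th1_f1} and \eqref{th1_f2}.

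The most delicate point will be the derivation of the limit form of $a^\theta\e$ for $\theta\neq(1,\dots,1)$: one must exploit the connectedness of the $Y_0$-part of $\Gamma$ across translations in every coordinate direction to force $u_0=0$, and this requires a careful discussion of how the $\partial_{\mathrm{ext}}Y$-vertices are identified under the $\mathbb{Z}^n$-action. A secondary subtlety is passing from strong convergence of the generalized resolvents provided by Theorem \ref{thS} to pointwise convergence of the ordered eigenvalues, while tracking that the limit operators are finite-dimensional so that all but finitely many $\lambda_k(\eps)$ necessarily escape to $+\infty$.
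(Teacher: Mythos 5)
Your proposal does not prove the statement in question. The statement is Simon's abstract monotone convergence theorem for sesquilinear forms (Theorem \ref{thS}): given a family $\{a_\eps\}$ of closed positive symmetric forms in a Hilbert space $H$ that increases monotonically as $\eps$ decreases, one must show that the limit form $a_0$ (with domain $\{u\in\bigcap_\eps\mathrm{dom}(a_\eps):\sup_\eps a_\eps[u,u]<\infty\}$) is \emph{closed}, and that the generalized resolvents converge strongly, $R_\eps u\to R_0 u$ for every $u\in H$. What you have written instead is an outline of the proof of Theorem \ref{th1} --- the spectral result for the quantum graph --- and it \emph{uses} Theorem \ref{thS} as a black box in its very first step. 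With respect to the assigned statement this is circular: you assume the result you were asked to establish. Nothing in your text addresses the two actual claims, namely closedness of $a_0$ (which requires an argument such as Simon's canonical decomposition of a quadratic form into its closable part and a singular remainder, or a direct lower-semicontinuity argument showing $a_0$ is the supremum of the closed forms $a_\eps$) or the strong resolvent convergence (typically obtained from the operator monotonicity $0\le(\mathcal{A}_{\eps_2}+I)^{-1}\le(\mathcal{A}_{\eps_1}+I)^{-1}$ for $\eps_2\le\eps_1$, which gives strong convergence of the decreasing family of bounded positive operators to some limit $R_0\ge 0$, followed by the identification of $R_0$ as the generalized resolvent of $a_0$ via the form characterization of its range and quadratic form).

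Note also that the paper itself offers no proof of Theorem \ref{thS}; it is imported verbatim from \cite{S78}. So the correct deliverable here would have been a self-contained proof of the abstract form-convergence theorem along the lines just indicated, not an application of it. The material you did write largely parallels Subsections \ref{subsec22}--\ref{subsec24} of the paper (identification of the limit forms $a_0^N$, $a_0^D$, $a_0^\theta$, the reduction to the finite-dimensional eigenvalue problems yielding $a_j$ and the roots of $\mathcal{F}(\lambda)=0$, and the band-edge enclosure), and as a sketch of Theorem \ref{th1} it is essentially sound; but it is an answer to a different question.
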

\smallskip

With these preliminaries we are able to give a short scheme of the proof of 
Theorem \ref{th1}. 
In view of \eqref{repres1}-\eqref{enclosure} the left end (respectively, the 
right end) of the $k$-th spectral band $L_k(\eps)$ is situated between the 
$k$-th Neumann eigenvalue $\lambda_{k}^N(\eps)$ and  the $k$-th periodic 
eigenvalue $\lambda_{k}^\theta(\eps)$, $\theta=(1,\dots,1)$ (respectively, 
between  the $k$-th antiperiodic eigenvalue $\lambda_{k}^\theta(\eps)$, 
$\theta=-(1,\dots,1)$ and the $k$-th Dirichlet eigenvalue 
$\lambda_{k}^D(\eps)$). Our main task is to prove that they both converge to 
$b_{k-1}$ as $k=2,\dots,m+1$ and converge to infinity as $k>m+1$ (respectively, 
converge to $a_{k}$ as $k=1,\dots,m$ and converge to infinity as $k>m$). These 
results taken together constitute the claim of Theorem~\ref{th1}.
Our analysis will be based on Simon's theorem formulated above. 

We notice that the band ends need not in general coincide with the corresponding periodic/antiperiodic eigenvalues, even in case $n=1$ (cf. \cite{HKSW07, EKW10}). What matters is that we can enclose them between two values which converge to the same limit as $\eps\to 0$.

\subsection{\label{subsec22}Asymptotic behaviour of Neumann and periodic 
eigenvalues}

In this subsection we study the behaviour as $\eps\to 0$ of the 
eigenvalues of the operators $\mathcal{A}^N\e$ and $\mathcal{A}^\theta\e$, $\theta=(1,1,\dots,1)$. 

Obviously, $\lambda_1^N(\eps)=0$. 
For the subsequent eigenvalues we will prove the following lemma.

\begin{lemma}\label{lm1}
One has 
\begin{gather*}
\begin{array}{ll}
\liml_{\eps\to 0}\lambda_k^N(\eps)=b_{k-1},&k=2,\dots,m+1,\\
\liml_{\eps\to 0}\lambda_k^N(\eps)=\infty,&k\geq m+2.
\end{array}
\end{gather*} 
\end{lemma}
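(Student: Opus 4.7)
The plan is to apply Simon's Theorem \ref{thS} to the family $\{a\e^N\}_{\eps>0}$ in $L_2(Y)$. Monotonicity is immediate since the domain $\mathcal{H}(Y)$ is $\eps$-independent, the second term of $a\e^N$ does not depend on $\eps$, and the coefficient $\eps^{-1}$ in front of the Dirichlet integral grows as $\eps$ decreases. To identify the limit form $a_0$, note that $u\in\mathrm{dom}(a_0)$ requires $\eps^{-1}\|u_e'\|^2_{L_2}$ to remain bounded as $\eps\to 0$, which forces $u_e'\equiv 0$ on every edge. Combined with the continuity of functions from $\mathcal{H}(Y)$ at vertices outside $\cupl_{j=1}^m\mathcal{V}_j$ and the connectedness of each $Y_j$ (property (ii) in \eqref{conditions}), this forces $u$ to take a constant value $c_j$ on each $Y_j$. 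Consequently, $\overline{\mathrm{dom}(a_0)}$ is the $(m+1)$-dimensional subspace $V_0\subset L_2(Y)$ of piecewise constant functions with the induced inner product $(c,c')=\suml_{j=0}^m l_j c_j\overline{c_j'}$, and
\begin{gather*}
a_0[u,u]=\suml_{j=1}^m q_j N_j|c_0-c_j|^2=\suml_{j=1}^m a_j l_j|c_0-c_j|^2.
\end{gather*}

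Next I would determine the spectrum of the associated operator $\A_0$ on $V_0$. Writing $a_0[c,c']=\lambda(c,c')$ for every $c'$ produces $(a_j-\lambda)c_j=a_j c_0$ for $j\geq 1$ and $\suml_{j=1}^m a_j l_j(c_0-c_j)=\lambda l_0 c_0$ for the zeroth component. Pairwise distinctness of the $a_j$'s rules out nonzero eigenvalues with $c_0=0$, and substituting the $c_j$'s from the first relation into the second shows that every nonzero eigenvalue with $c_0\neq 0$ satisfies $\mathcal{F}(\lambda)=0$. Together with the eigenvalue $\lambda=0$ (with constants as eigenspace) this yields $\sigma(\A_0)=\{0,b_1,\dots,b_m\}$, or equivalently $\lambda_1^0=0$ and $\lambda_k^0=b_{k-1}$ for $k=2,\dots,m+1$.

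For the eigenvalue asymptotics I proceed via min--max. The upper bound $\lambda_k^N(\eps)\leq\lambda_k^0$, $k\leq m+1$, is immediate from $\mathrm{dom}(a_0)\subset\mathcal{H}(Y)$ and $a\e^N[u,u]=a_0[u,u]$ on $\mathrm{dom}(a_0)$. For $k\geq m+2$ I use the max--min characterization with the $(m+1)$-dimensional test subspace $V_0$: the orthogonal complement of $V_0$ in $L_2(Y)$ consists of functions with zero mean on every $Y_j$, and applying the Poincar\'e inequality on each compact connected graph $Y_j$ gives $\|u\|^2_{L_2(Y)}\leq C\suml_{e\in\mathcal{E}(Y)}\|u_e'\|^2_{L_2}$, so that $\lambda_{m+2}^N(\eps)\geq C^{-1}\eps^{-1}\to\infty$.

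The main obstacle is the matching lower bound $\liminf_{\eps\to 0}\lambda_k^N(\eps)\geq\lambda_k^0$ for $k\leq m+1$. Here I would pick $L_2$-orthonormal eigenfunctions $u\e^{(j)}$ of $\A\e^N$, $j=1,\dots,k$. The bound $\eps^{-1}\|(u\e^{(j)})'\|^2_{L_2}\leq\lambda_j^N(\eps)\leq\lambda_k^0$ yields uniform $H^1$-boundedness on every edge, and the compact embedding $H^1([0,l])\hookrightarrow C([0,l])$ (via Arzel\`a--Ascoli) lets me extract a subsequence such that each $u\e^{(j)}$ converges in $L_2(Y)$ and uniformly on every edge to a piecewise constant limit $u^{(j)}\in V_0$; these limits remain $L_2$-orthonormal. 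Uniform convergence at the vertices gives $\tilde a[u\e^{(j)},u\e^{(j)}]\to a_0[u^{(j)},u^{(j)}]$, while $\tilde a[u\e^{(j)},u\e^{(j)}]\leq a\e^N[u\e^{(j)},u\e^{(j)}]=\lambda_j^N(\eps)$. The same reasoning applied to an arbitrary unit-norm linear combination $u=\suml_j c_j u^{(j)}$, using $a\e^N[\suml_j c_j u\e^{(j)},\suml_j c_j u\e^{(j)}]=\suml_j|c_j|^2\lambda_j^N(\eps)\leq\lambda_k^N(\eps)$, yields $a_0[u,u]\leq\liml_{\eps\to 0}\lambda_k^N(\eps)$. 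Inserting the $k$-dimensional subspace $\mathrm{span}(u^{(1)},\dots,u^{(k)})\subset V_0$ into the min--max for $\A_0$ then produces $\lambda_k^0\leq\liml_{\eps\to 0}\lambda_k^N(\eps)$, closing the argument.
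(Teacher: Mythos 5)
Your argument is correct and reaches the same limit operator $\mathcal{A}_0^N$ on the $(m+1)$-dimensional space of functions constant on each $Y_j$, but from that point on it takes a genuinely different route on both key steps. For the convergence of eigenvalues the paper is ``soft'': it applies Simon's monotone-convergence theorem to get strong resolvent convergence, upgrades it to norm resolvent convergence via Kato's theorem on monotone families of compact operators, and reads off $\lambda_k^N(\eps)\to\lambda_k^N(0)$ for $k\le m+1$ and $\lambda_k^N(\eps)\to\infty$ for $k\ge m+2$; you instead prove these facts by hand --- upper bound by inserting $V_0$ as a test space, divergence of the high eigenvalues via a Poincar\'e inequality on $V_0^{\perp}$ (which even yields the explicit rate $\lambda_{m+2}^N(\eps)\ge C^{-1}\eps^{-1}$ that the paper's argument does not provide), and the matching lower bound by compactness of normalized eigenfunctions. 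For the identification $\lambda_k^N(0)=b_{k-1}$ the paper expands $\mathrm{det}(A-\lambda I)$ in sums of principal minors and matches the coefficients with those of $\prod_{j}(a_j-\lambda)\,\mathcal{F}(\lambda)$, whereas you solve the finite-dimensional eigenvalue equations $(a_j-\lambda)c_j=a_jc_0$ directly and recover $\mathcal{F}(\lambda)=0$ in two lines; your computation is shorter and more transparent than the determinant bookkeeping. Two small points to tighten: since the forms $a\e^N$ increase as $\eps$ decreases, each $\lambda_k^N(\eps)$ is monotone in $\eps$, so the limits you write exist and you can avoid juggling subsequences in the final inequality; and in the Poincar\'e step you should note explicitly that the restriction of $u\in\mathcal{H}(Y)$ to each $Y_j$ is an $H^1$-function on the connected compact graph $Y_j$, continuous at all vertices interior to $Y_j$ (the possible jumps at $\mathcal{V}_j$ are irrelevant because they sit on $\partial Y_j$), which is what legitimizes applying the inequality piece by piece and summing over $j$.
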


\begin{proof}

The family of forms $\left\{a\e^N\right\}_{\eps}$ 
increases monotonically as $\eps\to 0$ and we may apply Theorem \ref{thS}. 
Namely,
let us introduce the limit form $a_0^N$,
\begin{gather*}
\mathrm{dom}(a_0^N):=\left\{u\in\mathcal{H}(Y):\ 
\sup\limits_{\eps>0}a^N\e[u,u]<\infty\right\},\quad 
a_0^N[u,v]=\liml_{\eps\to 0}a^N\e[u,v].
\end{gather*}
Evidently $\mathrm{dom}(a_0^N)$
consists of piecewise constant functions, which are continuous in $\overset{\circ}{Y_j}$ for each $j=0,\dots,m$ (this last one follows from \eqref{decomp}-\eqref{conditions} and the definition of $\mathcal{H}(\Gamma)$). Thus $\mathrm{dom}(a_0^N)$ is an $(m+1)$-dimensional subspace of $L_2(Y)$ consisting 
of 
functions $u$ of the form
\begin{gather}
\label{uN}
u(x)=\suml_{j=0}^m \mathbf{u}_j\chi_j(x),\text{ where 
}\mathbf{u}_j\text{ are 
constants, }\chi_j\text{ are the indicator functions of }Y_j
\end{gather}
and, clearly,
$$a_0^N[ u,v ]= \suml_{j=1}^m  q_j N_j \left(\mathbf{u}_0-\mathbf{u}_j\right)\overline{\left(\mathbf{v}_0-\mathbf{v}_j\right)}.$$
We denote by $\mathcal{A}^N_0$ a self-adjoint operator acting in 
$\overline{\mathrm{dom}(a_0^N)}=\mathrm{dom}(a_0^N)$ and associated with the 
form
$a_0^N$. It is straightforward to check that it 
acts as follows:
\begin{gather}\label{A0N}
\mathcal{A}^N_0 u = \left(\suml_{k=1}^m q_k N_k l_0^{-1} (\mathbf{u}_0 
-\mathbf{u}_k)\right)   \chi_0(x) +\suml_{j=1}^m q_j N_j 
l_j^{-1}(\mathbf{u}_j -\mathbf{u}_0)\chi_j(x).
\end{gather}
The operator $\mathcal{A}^N_0$ can be regarded as a Hermitian operator in 
$\mathbb{C}^{m+1}$ equipped with the scalar product 
$(x,y)_{\mathbb{C}^{m+1}}=\suml_{j=0}^{m} l_j x_j\overline{y_j}$.
We denote by 
$$0\leq \lambda^N_1(0)\leq \lambda^N_2(0)\leq\dots\leq \lambda^N_{m+1}(0)$$
its eigenvalues arranged in the ascending order and 
repeated according to their multiplicity. 
It is easy to see that
\begin{gather}\label{lambda0}
\lambda^N_1(0)=0.
\end{gather}
Later we will prove 
\begin{gather}\label{lastN}
\lambda_k^N(0)=b_{k-1},\ k=2,\dots,m+1.
\end{gather}

We denote by $R^{N}_0:L_2(Y)\to L_2(Y)$ the generalized resolvent 
corresponding to the form $a^N_0$. Its spectrum is a union 
of eigenvalues 
\begin{gather}\label{R_spec}
\mu^N_k(0)=(\lambda^N_k(0) + 1)^{-1},\ k=1,\dots,m+1
\end{gather}
and the point $\mu=0$, which is an eigenvalue of infinity 
multiplicity.

Now, applying Theorem \ref{thS} we conclude that
\begin{gather}\label{strong}
\forall u\in L_2(Y):\ (\mathcal{A}^N\e+ I)^{-1}u\to R^N_0 u\text{ as }\eps\to 0. 
\end{gather}
Moreover, since the operators $(\mathcal{A}^N\e+ I)^{-1}$ and $R^N_0$ are compact and $(\mathcal{A}^N_{\eps_1}+ I)^{-1}\geq (\mathcal{A}^N_{\eps_2}+ I)^{-1}\geq 0$
as $\eps_1\geq\eps_2$ then by virtue of the result of T.~Kato \cite[Theorem VIII-3.5]{K66} 
\eqref{strong} can be improved to the norm convergence  
\begin{gather*}
\|(\mathcal{A}^N\e+ I)^{-1}-R^N_0\|\to 0\text{ as 
}\eps\to 
0,
\end{gather*}
whence, using the classical perturbation theory, we obtain the convergence of spectra, namely
\begin{gather}\label{conv1}
\liml_{\eps\to 0}(\lambda^N_k(\eps)+1)^{-1} = \mu^N_k(0)\text{ as } 
k=1,\dots,m+1,\quad
\liml_{\eps\to 0}(\lambda^N_k(\eps)+1)^{-1} = 0\text{ as }k\geq m+2.
\end{gather}
Taking into account \eqref{R_spec} we obtain from \eqref{conv1}:
$$
\liml_{\eps\to 0}\lambda_k^N(\eps)=\lambda_k^N(0)\text{ as }k=1,\dots,m+1,\quad
\liml_{\eps\to 0}\lambda_k^N(\eps)=\infty\text{ as }k\geq m+2.
$$
Thus, to complete the proof of Lemma \ref{lm1} it remains to prove 
\eqref{lastN}.

In view of \eqref{A0N} $\lambda_k^N(0)$, $k=1,\dots,m+1$ are the eigenvalues of 
the  $(m+1)\times(m+1)$ matrix
\begin{gather*}
A=\left(\begin{matrix}\suml_{j=1}^m q_j N_{j} l_0^{-1}&-q_1 N_{1} 
l_0^{-1}&-q_2N_{2} l_0^{-1}&\dots&-q_m l_0^{-1}\\-q_1 N_{1} l_1^{-1}&q_1 N_{1} 
l_1^{-1}&0&\dots&0\\
-q_2 N_{2} l_2^{-1}&0&q_2 N_{2} l_2^{-1}&\dots&0\\
\vdots&\vdots&\vdots&\ddots&\vdots\\
-q_m N_{m} l_m^{-1}&0&0&\dots&q_m N_{m} l_m^{-1}
\end{matrix}\right)
\end{gather*}
They are the roots of the characteristic  equation
$$\mathrm{det}(A-\lambda I)=0.$$

We denote by $M(i_1,i_2,\dots,i_k)$ the minor of the matrix
$A$ staying on the intersection of $i_1$-th,
$i_2$-th,$\dots,i_k$-th rows
 and the columns with the same numbers. One has
the following formula (see, e.g., \cite[\S 2.13.2]{MM64}):
\begin{gather}\label{MM}
\mathrm{det}(A-\lambda I)=\suml_{k=0}^{m+1}\lambda^{m+1-k}(-1)^{m+1-k} E_k ,
\end{gather}
where 
\begin{gather}\label{EEE}
E_0=1,\quad E_k=\sum_{1\leq
i_1<i_2<\dots < i_k\leq m+1}M(i_1,i_2,\dots,i_k)\text{ as }k\geq 1.
\end{gather}

It is clear that $E_{m+1}=\mathrm{det}(A)=0$ since the sum of all columns of 
$A$ is zero. 
For $2\leq k\leq m$ we represent $E_k$ as a sum of two terms:
\begin{gather}\label{minor1}
E_k=
\sum_{1\leq
i_1<i_2<\dots < i_k\leq m}M(i_1+1,i_2+1,\dots,i_k+1) + 
\sum_{1\leq
i_2<\dots < i_k\leq m}M(1,i_2+1,\dots,i_k+1).
\end{gather}
One has (below ${1\leq
i_1<i_2<\dots < i_k\leq m}$):
\begin{gather}\label{minor2}
M(i_1+1,i_2+1,\dots,i_k+1)=
\mathrm{det}\left(\begin{matrix}q_{i_1}N_{i_1} l_{i_1}^{-1}&0&\dots&0\\
0&q_{i_2}N_{i_2} l_{i_2}^{-1}&\dots&0\\
\vdots&\vdots&\ddots&\vdots\\
0&0&\dots&q_{i_k}N_{i_k} l_{i_k}^{-1}
\end{matrix}\right)=
\prod_{\a=1}^k {q_{i_\a}N_{i_\a} l^{-1}_{i_\a}}.
\end{gather}
and (below ${1\leq
i_2<\dots < i_k\leq m}$)
\begin{multline}\label{minor3}
M(1,i_2+1,\dots,i_k+1)=
\mathrm{det}\left(\begin{matrix}\suml_{j=1}^m q_j N_j l_0^{-1}&-q_{i_2} N_{i_2} 
l_0^{-1}&-q_{i_3} N_{i_3} l_0^{-1}&\dots&-q_{i_k} N_{i_k} l_0^{-1}\\-q_{i_2} 
N_{i_2} l_{i_2}^{-1}&q_{i_2}N_{i_2} l_{i_2}^{-1}&0&\dots&0\\
-q_{i_3} N_{i_3} l_{i_3}^{-1}&0&q_{i_3} N_{i_3} l_{i_3}^{-1}&\dots&0\\
\vdots&\vdots&\vdots&\ddots&\vdots\\-q_{i_k}N_{i_k} l_{i_k}^{-1}&
0&0&\dots&q_{i_k}N_{i_k} l_{i_k}^{-1}
\end{matrix}\right)\\=
\mathrm{det}\left(\begin{matrix}\suml_{\a=2}^k q_{i_\a}N_{i_\a} 
l_0^{-1}&-q_{i_2} N_{i_\a}l_0^{-1}&-q_{i_3}N_{i_\a} 
l_0^{-1}&\dots&-q_{i_k}N_{i_k} l_0^{-1}\\-q_{i_2}N_{i_2} 
l_{i_2}^{-1}&q_{i_2}N_{i_2} l_{i_2}^{-1}&0&\dots&0\\
-q_{i_3}N_{i_3} l_{i_3}^{-1}&0&q_{i_3}N_{i_3} l_{i_3}^{-1}&\dots&0\\
\vdots&\vdots&\vdots&\ddots&\vdots\\-q_{i_k}N_{i_k} l_{i_k}^{-1}&
0&0&\dots&q_{i_k}N_{i_k} l_{i_k}^{-1}
\end{matrix}\right)\\+
\mathrm{det}\left(\begin{matrix}\suml_{j\notin \{i_2,\dots , i_k\}} q_jN_{j} 
l_0^{-1}&0&0&\dots&0\\-q_{i_2} N_{i_2} l_{i_2}^{-1}&q_{i_2}N_{i_2} 
l_{i_2}^{-1}&0&\dots&0\\
-q_{i_3}N_{i_3} l_{i_3}^{-1}&0&q_{i_3}N_{i_3} l_{i_3}^{-1}&\dots&0\\
\vdots&\vdots&\vdots&\ddots&\vdots\\-q_{i_k}N_{i_k} l_{i_k}^{-1}&
0&0&\dots&q_{i_k}N_{i_k} l_{i_k}^{-1}
\end{matrix}\right).
\end{multline}
The first determinant in the right-hand-side of \eqref{minor3} is equal to zero 
since the sum all 
columns of the corresponding matrix  is equal to zero. As a result we obtain:
\begin{gather}\label{minor4}
M(1,i_2+1,\dots,i_k+1)=\left(\suml_{j\notin \{i_2,\dots ,i_k\}} q_j N_{j} 
l_0^{-1}\right)\left(\prod_{\a=2}^k {q_{i_\a}N_{i_\a} l^{-1}_{i_\a}}\right).
\end{gather}
Via a simple algebraic calculations it is not hard to get from \eqref{minor4} that
\begin{gather}\label{minor4+}
\suml_{1\leq
i_2<\dots < i_k\leq m}M(1,i_2+1,\dots,i_k+1)=l_0^{-1}\sum_{1\leq
i_1<i_2<\dots < i_k\leq m}\left(\left(\prod_{\a=1}^k  q_{i_\a}N_{i_\a}l^{-1}_{i_\a} 
\right)\left(\suml_{\a=1}^k l_{i_\a}\right)\right).
\end{gather}

Combining \eqref{minor1}, \eqref{minor2}, \eqref{minor4+} and taking into account the definition of $a_j$ one arrives 
at
\begin{gather}\label{minor5}
E_k=\sum_{1\leq
i_1<i_2<\dots < i_k\leq m}\left(\left(\prod_{\a=1}^k  a_{i_\a} 
\right)\left(1+l_0^{-1}\suml_{\a=1}^k l_{i_\a}\right)\right).
\end{gather}
We have proved \eqref{minor5} for $2\leq k\leq m$. For $k=1$ it holds as well:
$$E_1\overset{\eqref{EEE}}=\mathrm{tr}A=\suml_{i=1}^m q_i N_i l_0^{-1}+\suml_{i=1}^m q_i N_i l_i^{-1}=
\suml_{i=1}^m a_i \left(1+l_0^{-1}l_{i}\right).$$

Now let us study the function $\mathcal{F}(\lambda)$ staying in the 
right-hand-side of equation \eqref{mu_eq}. 
One has 
\begin{gather}
\mathcal{F}(\lambda)= {1\over \prod\limits_{j=1}^m (a_j-\lambda)} \left( 
\prod\limits_{j=1}^m (a_j-\lambda) +l_0^{-1}\suml_{i=1}^m\left( a_i l_i \prod\limits_{j\not= 
i}(a_j-\lambda) \right)\right).
\end{gather}
Grouping the terms with the same exponents of $\lambda$ one can 
easily obtain:
\begin{gather}
\mathcal{F}(\lambda)= {1\over\prod\limits_{j=1}^m (a_j-\lambda)}\suml_{k=0}^m 
\lambda^{m-k}\left((-1)^{m-k} \sum_{1\leq
i_1<i_2<\dots < i_k\leq m}\left(\left(\prod_{\a=1}^k  a_{i_\a} 
\right)\left(1+l_0^{-1}\suml_{\a=1}^k l_{i_\a}\right)\right)\right)
\end{gather}
or, using \eqref{MM}, \eqref{minor5} and taking into account that 
$E_{m+1}=0$, we obtain:
\begin{multline}
\mathcal{F}(\lambda)= {1\over \prod\limits_{j=1}^m (a_j-\lambda)}\suml_{k=0}^m 
\lambda^{m-k}(-1)^{m-k} E_k= {1\over -\lambda  \prod \limits_{j=1}^m (a_j-\lambda)}\suml_{k=0}^m \lambda^{m+1-k}(-1)^{m+1-k} 
E_k\\={1\over -\lambda \prod\limits_{j=1}^m (a_j-\lambda)}\suml_{k=0}^{m+1} 
\lambda^{m+1-k}(-1)^{m+1-k} E_k= {1\over -\lambda \prod\limits_{j=1}^m (a_j-\lambda)} 
\mathrm{det}(A-\lambda I),
\end{multline}
whence, taking into account \eqref{abab} and \eqref{lambda0}, we easily obtain 
\eqref{lastN}. 
The lemma is proved.
\end{proof}

The same asymptotics are valid for the eigenvalues of the operator 
$\mathcal{A}\e^\theta$
as $\theta= (1,1,\dots,1)$.
\begin{lemma}\label{lm1+}
One has 
\begin{gather*}
\begin{array}{ll}
\liml_{\eps\to 0}\lambda_k^\theta(\eps)=b_{k-1},&k=2,\dots,m+1,\\
\liml_{\eps\to 0}\lambda_k^\theta(\eps)=\infty,&k\geq m+2
\end{array}
\end{gather*}
provided $\theta= (1,1,\dots,1)$. 
\end{lemma}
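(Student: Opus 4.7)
The plan is to mirror the proof of Lemma \ref{lm1}. The family $\{a_\eps^\theta\}_{\eps>0}$ with $\theta=(1,1,\dots,1)$ has an $\eps$-independent domain and increases monotonically as $\eps$ decreases (only the prefactor $\eps^{-1}$ in the Dirichlet term depends on $\eps$, and it blows up as $\eps\to 0$). Simon's theorem (Theorem \ref{thS}) therefore applies and yields a closed positive limit form
\begin{gather*}
\mathrm{dom}(a_0^\theta)=\Big\{u\in\mathrm{dom}(a_\eps^\theta):\ \sup_{\eps>0}a_\eps^\theta[u,u]<\infty\Big\},\qquad a_0^\theta[u,v]=\liml_{\eps\to 0}a_\eps^\theta[u,v],
\end{gather*}
together with norm resolvent convergence of the associated operators (using compactness of the resolvents and Kato's Theorem VIII-3.5, exactly as in Lemma \ref{lm1}).

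The key step is to identify $a_0^\theta$ with the Neumann limit form $a_0^N$ from Lemma \ref{lm1}. The finiteness of $\sup_\eps a_\eps^\theta[u,u]$ forces $\d u_e/\d x_e\equiv 0$ on every edge $e$, so $u$ is constant along each edge. The continuity conditions imposed by $\mathcal{H}^\theta(\Gamma)$ at vertices outside $\cupl_{i,j}\mathcal{V}_{ij}$, combined with connectedness of each $Y_j$ (condition (ii)), allow one to propagate this constant through $Y_j$, giving the representation \eqref{uN} with some constants $\mathbf{u}_j$. The only extra restriction coming from $\theta$-periodicity is that values at points of $\partial_{\mathrm{ext}}Y$ must agree with the values at their periodic images on the opposite side of the period cell; but by condition (iv) $\partial_{\mathrm{ext}}Y\subset\partial Y_0$, so both sides already carry the common value $\mathbf{u}_0$ and the periodicity constraint is automatically satisfied. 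Hence $\mathrm{dom}(a_0^\theta)=\mathrm{dom}(a_0^N)$ and the two forms agree on this domain.

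Once this identification is made, the associated operator coincides with $\mathcal{A}_0^N$ in \eqref{A0N}, whose eigenvalues are $\lambda_1^N(0)=0$ and $\lambda_k^N(0)=b_{k-1}$, $k=2,\dots,m+1$, by \eqref{lastN}. The remaining part of the proof --- convergence of the generalized resolvents in norm, extraction of spectral convergence via classical perturbation theory, and the escape to infinity of all eigenvalues of index $k\geq m+2$ --- is literally the same as in Lemma \ref{lm1} with $N$ replaced by $\theta$. The main (and essentially the only) new point compared with Lemma \ref{lm1} is the verification that $\theta$-periodicity does not shrink the limit domain, which is exactly where hypothesis (iv) on the decomposition of $Y$ is used.
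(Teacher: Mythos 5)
Your proposal is correct and follows essentially the same route as the paper: the paper's proof of this lemma simply observes that functions of the form \eqref{uN} belong to $\mathrm{dom}(a\e^{\theta})$ for $\theta=(1,\dots,1)$, so that $a_0^\theta=a_0^N$, and then repeats the proof of Lemma \ref{lm1} verbatim. Your additional explanation of why the periodicity constraint is vacuous for such functions (via condition (iv), since all points of $\partial_{\mathrm{ext}}Y$ carry the common value $\mathbf{u}_0$) is exactly the detail the paper leaves implicit.
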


\begin{proof}
It is easy to see that functions $u$ of the form \eqref{uN}
belong to 
$\mathrm{dom}(a\e^{\theta})$ provided $\theta= (1,1,\dots,1)$, whence, 
evidently,
the limit form 
$a_0^\theta$ coincides 
with the form $a_0^N$.
In the rest the proof repeats word-by-word the proof of Lemma \ref{lm1}.
\end{proof}

\subsection{\label{subsec23}Asymptotic behaviour of Dirichlet and 
$\theta$-periodic eigenvalues ($\theta\not=(1,1,\dots,1)$)}

In this subsection we study the behaviour as $\eps\to 0$ of the 
eigenvalues of the operators $\mathcal{A}^D\e$ and $\mathcal{A}^\theta\e$, $\theta\not=(1,1,\dots,1)$. 
 
\begin{lemma}\label{lm2}
One has 
\begin{gather*}
\begin{array}{ll}
\liml_{\eps\to 0}\lambda_k^D(\eps)=a_{k},&k=1,\dots,m,\\
\liml_{\eps\to 0}\lambda_k^D(\eps)=\infty,&k\geq m+1.
\end{array}
\end{gather*} 
\end{lemma}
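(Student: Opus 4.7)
The plan is to parallel the argument used for Lemma \ref{lm1}, adapting it to the Dirichlet setting. The family $\{a\e^D\}_{\eps>0}$ consists of closed positive forms sharing the common domain $\mathrm{dom}(a\e^D) = \{u \in \mathcal{H}(Y): u = 0 \text{ on } \partial_{\mathrm{ext}}Y\}$, and since the first term of the form carries the prefactor $\eps^{-1}$, the family increases monotonically as $\eps$ decreases. Thus Theorem \ref{thS} applies, and the task reduces to identifying the limit form $a_0^D$, computing its associated operator $\mathcal{A}_0^D$ together with its eigenvalues, and then upgrading the strong convergence of the generalized resolvents to norm convergence exactly as in Lemma \ref{lm1}.

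The first step will be to determine $\mathrm{dom}(a_0^D)$. The condition $\sup_\eps a\e^D[u,u] < \infty$ forces the Dirichlet integral $\suml_{e\in\mathcal{E}(Y)} \|u_e'\|^2_{L_2(0,l(e))}$ to vanish (because of the $\eps^{-1}$ prefactor), so $u$ must be constant on each edge; combined with the continuity imposed at vertices outside $\cupl_{j=1}^m \mathcal{V}_j$ by membership in $\mathcal{H}(Y)$, and with conditions (ii), (iii), (vi) of \eqref{conditions}, this forces $u$ to be constant on each $Y_j$, say $u = \suml_{j=0}^m \mathbf{u}_j \chi_j$. The crucial new ingredient compared with the Neumann case is the Dirichlet condition on $\partial_{\mathrm{ext}}Y$: by condition (iv) of \eqref{conditions} one has $\partial_{\mathrm{ext}}Y \subset \partial Y_0$, and together with the connectedness of $Y_0$ this forces $\mathbf{u}_0 = 0$. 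Hence $\mathrm{dom}(a_0^D)$ is the $m$-dimensional subspace parametrized by $(\mathbf{u}_1,\dots,\mathbf{u}_m) \in \mathbb{C}^m$, and
$$a_0^D[u,u] = \suml_{j=1}^m q_j N_j |\mathbf{u}_j|^2.$$

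Next, identifying the associated operator $\mathcal{A}_0^D$ on $\overline{\mathrm{dom}(a_0^D)} = \mathrm{dom}(a_0^D)$, viewed as $\mathbb{C}^m$ equipped with the weighted inner product $(x,y) = \suml_{j=1}^m l_j x_j \overline{y_j}$, a direct computation from $(\mathcal{A}_0^D u, v) = a_0^D[u,v]$ yields
$$(\mathcal{A}_0^D u)_j = {q_j N_j\over l_j}\mathbf{u}_j = a_j \mathbf{u}_j, \quad j=1,\dots,m,$$
so $\mathcal{A}_0^D$ is diagonal with eigenvalues precisely $a_1 < a_2 < \dots < a_m$, in view of \eqref{aj} and \eqref{aj_ord}. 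No characteristic polynomial manipulation is required; the spectrum can be read off immediately, which is a pleasant simplification compared with Lemma \ref{lm1}.

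Finally, Simon's theorem yields strong convergence of $(\mathcal{A}\e^D+I)^{-1}$ to the generalized resolvent $R_0^D$; monotonicity of the forms together with compactness of the resolvents allow upgrading to norm convergence via \cite[Theorem VIII-3.5]{K66}, which in turn gives convergence of spectra: $\lambda_k^D(\eps) \to a_k$ for $k = 1,\dots,m$ and $\lambda_k^D(\eps) \to \infty$ for $k \geq m+1$. The main conceptual point is the identification of $\mathrm{dom}(a_0^D)$, specifically tracing how the Dirichlet condition on $\partial_{\mathrm{ext}}Y$ and the connectedness of $Y_0$ combine to eliminate the $\mathbf{u}_0$ mode; the remainder is lighter than in the Neumann case.
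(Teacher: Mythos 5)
Your proposal is correct and follows essentially the same route as the paper: identify the limit form via Simon's monotone convergence theorem, observe that the Dirichlet condition on $\partial_{\mathrm{ext}}Y\subset\partial Y_0$ kills the $\mathbf{u}_0$ mode so that the limit operator is diagonal with eigenvalues $a_j$, and upgrade to norm-resolvent convergence via Kato. The paper's proof is exactly this argument, stated by reference to the proof of Lemma \ref{lm1}.
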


\begin{proof}
For the proof we employ the same method as in the proof of Lemma \ref{lm1}.
Namely, we again introduce the limit form $a\e^D[u,u]$ by 
\begin{gather*}
\mathrm{dom}(a_0^D):=\left\{u\in\mathcal{H}(Y),\ u|_{\partial_{\mathrm{ext}} 
Y}=0:\ 
\sup\limits_{\eps>0}a^D\e[u,u]<\infty\right\},\quad 
a_0^D[u,v]=\liml_{\eps\to 0}a^D\e[u,v].
\end{gather*}
It is clear that $\mathrm{dom}(a_0^D)$
consists of piecewise constant functions, which are continuous in 
$\overset{\circ}{Y_j}$ for each $j=1,\dots,m$ and equal to zero in 
$\overset{\circ}{Y_0}$.
Thus 
$\mathrm{dom}(a_0^D)$ is an $m$-dimensional subspace of $L_2(Y)$ consisting of 
functions $u$ of the form
$$u(x)=\suml_{j=1}^m \mathbf{u}_j\chi_j(x),\text{ where }\mathbf{u}_j\text{ are 
constants, }\ \chi_j\text{ is an indicator function of }Y_j$$
and
$$a_0^D[ u,v ]= \suml_{j=1}^m  q_j N_j \mathbf{u}_j\overline{\mathbf{v}_j}.$$
We denote by $\mathcal{A}^D_0$ a bounded and self-adjoint operator acting in 
$\overline{\mathrm{dom}(a_0^D)}=\mathrm{dom}(a_0^D)$ and associated with the 
form
$a_0^D$. It acts as follows:
\begin{gather}\label{A0D}
\mathcal{A}^D_0 u = \suml_{j=1}^m q_j N_j l_j^{-1}\mathbf{u}_j  
\chi_j(x).
\end{gather}

Repeating word-by-word the arguments of the proof of Lemma \ref{lm1} we 
conclude 
that
\begin{gather*}
\begin{array}{ll}
\liml_{\eps\to 0}\lambda_k^D(\eps)=\lambda_{k}^D(0),&k=1,\dots,m,\\
\liml_{\eps\to 0}\lambda_k^D(\eps)=\infty,&k\geq m+1,
\end{array}
\end{gather*} 
where $\lambda_k^D(0)$ is the $k$-th eigenvalue of the operator 
$\mathcal{A}_0^D$. 
It follows from \eqref{A0D} that
$$\lambda_k^D(0)={q_k N_k l_k^{-1}}=a_k.$$
The lemma is proved.
\end{proof}

The same asymptotics are valid for the eigenvalues of the operator 
$\mathcal{A}\e^\theta$
as $\theta\not= (1,1,\dots,1)$.

\begin{lemma}\label{lm2+}
One has 
\begin{gather*}
\begin{array}{ll}
\liml_{\eps\to 0}\lambda_k^\theta(\eps)=a_{k},&k=1,\dots,m,\\
\liml_{\eps\to 0}\lambda_k^\theta(\eps)=\infty,&k\geq m+1.
\end{array}
\end{gather*}
provided $\theta\not= (1,1,\dots,1)$. 
\end{lemma}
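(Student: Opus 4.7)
The plan is to repeat the proof of Lemma \ref{lm2} verbatim once I have identified the limit form $a_0^\theta$ produced by Simon's theorem (Theorem \ref{thS}) applied to the monotone family $\{a_\eps^\theta\}_{\eps>0}$, and verified that $a_0^\theta = a_0^D$. All the machinery---monotonicity of $\{a_\eps^\theta\}$ in $\eps$ (the domain is $\eps$-independent and the $\eps^{-1}$ factor makes the kinetic part grow as $\eps\to 0$), compactness of the resolvents on the compact graph $Y$, and the upgrade of strong to norm resolvent convergence via Kato's monotone theorem---carries over without change; only the identification of $\mathrm{dom}(a_0^\theta)$ and of $a_0^\theta$ needs a step genuinely specific to the quasi-periodic case.

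First I would describe $\mathrm{dom}(a_0^\theta)$. By Simon's definition of the limit form, any $u\in\mathrm{dom}(a_0^\theta)$ must belong to $\mathrm{dom}(a_\eps^\theta)$ for every $\eps>0$ and satisfy $\sup_\eps a_\eps^\theta[u,u]<\infty$. The bound on the supremum forces $\mathrm{d}u_e/\mathrm{d}x_e\equiv 0$ on each edge $e\in\mathcal{E}(Y)$, so $u$ is constant edgewise; combined with continuity of $u$ at vertices of $Y$ not in $\cupl_{j=1}^m\mathcal{V}_j$ and the connectedness of each $Y_j$ (condition (ii) of \eqref{conditions}), this yields that $u$ has the form \eqref{uN} for some constants $\mathbf{u}_0,\dots,\mathbf{u}_m$.

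The crucial new input is the $\theta$-periodicity. Since $\partial_{\mathrm{ext}}Y\subset\partial Y_0$ (condition (iv)) and $\partial_{\mathrm{ext}}Y$ is disjoint from all the $\delta'$-vertex sets, at each $v\in\partial_{\mathrm{ext}}Y$ the limit of $u$ from inside $Y$ equals $\mathbf{u}_0$, while the limit of its $\theta$-periodic extension from the neighbouring translate $Y+\sigma e_k$ (where $\sigma\in\{\pm 1\}$ and $k$ is determined by the crossing direction at $v$) equals $\theta_k^{\sigma}\mathbf{u}_0$; continuity of the extension at $v$ thus forces $\mathbf{u}_0=\theta_k^{\sigma}\mathbf{u}_0$. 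Since $\Gamma$ is connected, there must be crossings of $\partial_{\mathrm{ext}}Y$ in every direction $e_1,\dots,e_n$ (otherwise $\Gamma$ would split into pieces invariant only under a proper subgroup of $\mathbb{Z}^n$); picking a $k$ with $\theta_k\neq 1$, which exists because $\theta\neq (1,1,\dots,1)$, we conclude $\mathbf{u}_0=0$.

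Hence $\mathrm{dom}(a_0^\theta)$ coincides with the space $\mathrm{dom}(a_0^D)$ appearing in Lemma \ref{lm2}, and the values $a_0^\theta[u,v]=\liml_{\eps\to 0}a_\eps^\theta[u,v]$ reduce to $\suml_{j=1}^m q_j N_j\mathbf{u}_j\overline{\mathbf{v}_j}=a_0^D[u,v]$ because the kinetic term vanishes and $\mathbf{u}_0=\mathbf{v}_0=0$ eliminates the $\mathbf{u}_0$-contributions in the vertex sum. The associated limit operator is therefore $\mathcal{A}_0^D$, whose eigenvalues are $a_1,\dots,a_m$ by \eqref{A0D}, and the norm-resolvent convergence argument of Lemma \ref{lm2} gives the stated limits. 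The only genuinely new obstacle compared with Lemma \ref{lm1+} is the geometric step forcing $\mathbf{u}_0=0$, which rests on connectedness of $\Gamma$ and the existence of a boundary crossing in a direction where $\theta_k\neq 1$.
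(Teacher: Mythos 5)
Your proposal is correct and follows essentially the same route as the paper: identify the limit form via Simon's theorem, observe that $\theta$-quasi-periodicity with $\theta\neq(1,\dots,1)$ forces $\mathbf{u}_0=0$ so that $a_0^\theta=a_0^D$, and then repeat the argument of Lemma \ref{lm2}; the paper simply states the vanishing of $\mathbf{u}_0$ as ``easy to see'' where you spell out the connectivity argument. One small refinement: connectedness of $\Gamma$ guarantees only that the lattice vectors realized by boundary crossings \emph{generate} $\mathbb{Z}^n$ (not necessarily that each coordinate direction $e_k$ occurs), but since a nontrivial character $\theta$ cannot be trivial on a generating set, your conclusion $\mathbf{u}_0=0$ still follows.
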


\begin{proof}
The definitional domain of the form $a\e^{\theta}$ consists 
of functions $u$ having the form \eqref{uN}
and belonging to 
$\mathrm{dom}(a\e^{\theta})$. 
It is easy to see that if $\theta\not= (1,1,\dots,1)$ then
$\mathbf{u}_0=0$ (otherwise $u\notin \mathrm{dom}(a\e^{\theta})$).
Thus
the limit form 
$a_0^\theta$ coincides 
with the form $a_0^D$ provided $\theta\not= (1,1,\dots,1)$.
In the rest the proof repeats word-by-word the proof of Lemma \ref{lm2}.
\end{proof}

\subsection{\label{subsec24}End of the proof of Theorem \ref{th1}}

Due to \eqref{repres1}-\eqref{repres1+} one has
\begin{gather}\label{sp}
\sigma(\mathcal{A}\e)=\cupl_{k=1}^\infty [\lambda_k^-(\eps),\lambda_k^+(\eps)]
\end{gather}
with the compact intervals are $[\lambda_k^-(\eps),\lambda_k^+(\eps)]$ 
defined as follows:
\begin{gather}\label{interval}
[\lambda_k^-(\eps),\lambda_k^+(\eps)]= \cupl_{\theta\in \mathbb{T}^n}
\left\{\lambda_k^{\theta}(\eps)\right\}.
\end{gather}

We denote $\theta_1:=(1,1,\dots,1)$, $\theta_{-1}:=-\theta_1$.
Using \eqref{enclosure} and \eqref{interval} we conclude that
\begin{gather}\label{double1}
\lambda_k^{N}(\eps)\leq \lambda_k^-(\eps)\leq
\lambda_k^{\theta_1}(\eps),\\\label{double2}
\lambda_k^{\theta_{-1}}(\eps)\leq \lambda_k^+(\eps)\leq
\lambda_k^{D}(\eps).
\end{gather}
The left and right-hand-sides of
\eqref{double1} are equal to zero as $k=1$. In view of Lemmata \ref{lm1}, 
\ref{lm1+} if $k=2,\dots,m+1$ 
they both converge to $b_{k-1}$ as $\eps\to 0$, while if $k\ge
m+2$ they converge to infinity. Hence
\begin{gather}\label{a-}
\lambda_1^-(\eps)=0,\quad \liml_{\eps\to 0}\lambda_k^-(\eps)=b_{k-1}\text{
if }2\leq k\leq m+1,\quad \liml_{\eps\to
0}\lambda_k^-(\eps)=\infty\text{ if }k\ge m+2.
\end{gather}
Similarly in view Lemmata \ref{lm2}, \ref{lm2+} we obtain
\begin{gather}\label{a+}
\liml_{\eps\to 0}\lambda_k^+(\eps)=a_k\text{ if }1\leq k\leq
m,\quad  \liml_{\eps\to 0}\lambda_k^+(\eps)=\infty\text{ if }k\ge m+1.
\end{gather}
Then \eqref{th1_f1}-\eqref{th1_f2} follow directly from
\eqref{sp}, \eqref{a-}, \eqref{a+}. Theorem \ref{th1} is proved.

\section{\label{sec3}Periodic quantum graphs with asymptotically predefined 
spectral gaps}

In this section we will show that under a suitable choice of the graph $\Gamma$ and the coupling constants $q_j$ the limit intervals $(a_j,b_j)$ coincide with predefined ones.

Let $\Gamma$ be a $\mathbb{Z}^n$-periodic graph with a periodic cell $Y$ 
admitting decomposition \eqref{decomp}-\eqref{conditions}. Recall that the 
notation 
$l_j$ stays for the total length of all edges belonging to the set
$Y_j$ ($j=0,\dots,m$), by $N_j$ we denote the number of points belonging to 
the set $\mathcal{V}_j$ ($j=1,\dots,m$) -- see Section \ref{sec1}, where these 
notations are introduced. Also in the same way as before we introduce the 
numbers $a_j$ and $b_j$ ($j=1,\dots,m$).

\begin{theorem}
\label{th2}
Let $L>0$ be an arbitrarily large number and let $(\a_j,\b_j)$
($j={1,\dots,m},\ m\in \mathbb{N} $) be arbitrary intervals
satisfying
\begin{gather}\label{intervals}
0<\a_1,\quad \a_j<\b_{j}< \a_{j+1},\ j=\overline{1,m-1},\quad
\a_m<\b_{m}<L.
\end{gather} 
Suppose that the numbers $l_j$, $j=0,\dots,m$, satisfy 
\begin{gather}\label{exact_formula}
l_j=l_0\ds{{\b_j-\a_j\over\a_j}\prod\limits_{i=\overline{1,m}|i\not=
j}\ds\left({\b_i-\a_j\over \a_i-\a_j}\right)}.
\end{gather}

Then one has
\begin{gather}\label{a=a-b=b}
a_j=\a_j,\ b_j=\b_j,\quad j=1,\dots,m
\end{gather}
provided
\begin{gather}\label{q=}
q_j= {\a_j {l}_j \over N_j },\ j=1,\dots,m.
\end{gather}

\end{theorem}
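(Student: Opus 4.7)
The plan is to separate the assertion \eqref{a=a-b=b} into its two halves. The equality $a_j = \a_j$ is immediate: plugging $q_j = \a_j l_j / N_j$ into the definition \eqref{aj} gives $a_j = N_j q_j / l_j = \a_j$, and the ordering assumption \eqref{intervals} then automatically yields \eqref{aj_ord}. So the substantive task is to identify $b_j = \b_j$.

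By the discussion surrounding \eqref{mu_eq}--\eqref{abab}, the numbers $b_1<\dots<b_m$ are precisely the $m$ real roots of $\mathcal{F}(\lambda)=0$, and they interlace the $a_j$. Since the $\b_j$ also interlace the $a_j=\a_j$ by \eqref{intervals}, it suffices to verify $\mathcal{F}(\b_j)=0$ for every $j=1,\dots,m$. To do so I would clear denominators, multiplying $\mathcal{F}(\lambda)$ by $l_0\prod_{i=1}^m(\a_i-\lambda)$ to obtain the polynomial
\[
P(\lambda) := l_0 \prod_{i=1}^m (\a_i-\lambda) + \suml_{i=1}^m \a_i l_i \prod_{k\neq i}(\a_k-\lambda),
\]
which has degree $m$ in $\lambda$ with leading coefficient $(-1)^m l_0$. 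The conjunction ``$\mathcal{F}(\b_j)=0$ for all $j$'' is thus equivalent to the polynomial identity
\[
P(\lambda) = l_0\prod_{j=1}^m(\b_j-\lambda).
\]
Both sides are polynomials of degree $m$ with identical leading coefficient, so by polynomial interpolation it suffices to verify equality at the $m$ points $\lambda=\a_1,\dots,\a_m$. At $\lambda=\a_j$ every summand of $P(\a_j)$ vanishes except the $i=j$ one (the first product vanishes due to its $(\a_j-\a_j)$ factor, and for $i\neq j$ the inner product $\prod_{k\neq i}(\a_k-\a_j)$ contains the zero factor obtained at $k=j$), leaving
\[
P(\a_j)=\a_j l_j \prod_{k\neq j}(\a_k-\a_j).
\]
Setting this equal to $l_0\prod_{i=1}^m(\b_i-\a_j)$ and solving for $l_j$ (splitting off the $i=j$ factor in the numerator) reproduces precisely the prescription \eqref{exact_formula}.

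A sanity check I would also perform is positivity of the lengths $l_j$ defined by \eqref{exact_formula}: under \eqref{intervals} one has $\b_j-\a_j>0$, while for each $i\neq j$ the factors $\b_i-\a_j$ and $\a_i-\a_j$ share the same sign (both negative when $i<j$, since $\b_i<\a_{i+1}\leq\a_j$; both positive when $i>j$), so every ratio in the product is strictly positive. No serious obstacle arises in this proof; it is a purely algebraic reduction to a polynomial interpolation identity, and the only delicate bookkeeping is the sign analysis ensuring the prescribed lengths are admissible. Once $a_j=\a_j$ and $b_j=\b_j$ are established, Theorem \ref{th1} applied with these values gives that, for $\eps$ small enough, the first $m$ gaps of $\mathcal{A}\e$ are asymptotically located at the predefined intervals $(\a_j,\b_j)$.
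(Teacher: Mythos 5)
Your argument is correct and follows the same overall reduction as the paper: both proofs first obtain $a_j=\a_j$ by substituting \eqref{q=} into \eqref{aj}, and both then reduce the identification $b_j=\b_j$ to verifying that $\mathcal{F}(\b_i)=0$ for $i=1,\dots,m$, i.e.\ to the system \eqref{system}. The difference lies in how that system is handled. The paper treats \eqref{system} as a linear algebraic system in the unknowns $l_1,\dots,l_m$ and invokes \cite[Lemma 4.1]{Kh12}, which asserts that its unique solution is given by \eqref{exact_formula}. You instead verify directly that the lengths \eqref{exact_formula} solve the system, by clearing denominators and establishing the degree-$m$ polynomial identity $l_0\prod_{i}(\a_i-\lambda)+\suml_{i} \a_i l_i\prod_{k\neq i}(\a_k-\lambda)=l_0\prod_{j}(\b_j-\lambda)$ through interpolation at the $m$ distinct nodes $\a_1,\dots,\a_m$, where only the $i=j$ summand survives. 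Your route is self-contained and more elementary; note also that the uniqueness of the solution, which the cited lemma provides, is not actually needed for the implication --- only that \eqref{exact_formula} \emph{is} a solution --- whereas the paper's citation keeps the proof short by reusing machinery from \cite{Kh12}. Your concluding positivity check on the $l_j$ reproduces the content of the remark the paper places after Theorem \ref{th2}, and your appeal to the fact that $\mathcal{F}$ has exactly $m$ roots (so that the $m$ distinct zeros $\b_1<\dots<\b_m$ must be all of them, in order) correctly closes the argument.
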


\begin{remark}
Since 
the intervals $(\a_j,\b_j)$ satisfy \eqref{intervals}
then
\begin{gather*}
\forall j:\ \b_j>\a_j,\quad \forall i\not= j:\
\mathrm{sign}(\b_i-\a_j)=\mathrm{sign}(\a_i-\a_j)\not= 0
\end{gather*}
and therefore  
the quantity staying in the right-hand-side of \eqref{exact_formula} is positive.
\end{remark}

\begin{remark}
Results, similar to Theorem \ref{th2} (i.e., construction of periodic operators with gaps that are close to given intervals), were obtained by one of the authors in \cite{Kh12} for  Laplace-Beltrami  operators on periodic Riemannian manifolds, in \cite{Kh13} for periodic elliptic  divergence type operators in $\mathbb{R}^n$, and in \cite{Kh14} for Neumann Laplacians in periodic domains in $\mathbb{R}^n$. 
\end{remark}

\begin{proof}
Plugging \eqref{q=} into \eqref{aj} we obtain
the first equality of \eqref{a=a-b=b}.

Recall that the numbers $b_j$ are the roots of the equation  \eqref{mu_eq} written in the ascending order. Therefore, in order to prove the second equality in \eqref{a=a-b=b} one has to show that
\begin{gather}\label{system}
\forall i=1,\dots,m:\quad 1+\suml_{j=1}^m{\a_j l_j\over l_0(\a_j-\b_i)}=0.
\end{gather}
Let us consider (\ref{system}) as the linear algebraic system of
$m$ equations with unknowns $l_j$, $j=1,\dots,m$. 
It was proved in \cite[Lemma 4.1]{Kh12} that this system has the unique solution defined by formula \eqref{exact_formula}. This implies the   second equality in \eqref{a=a-b=b}. Theorem \ref{th2} is proved.
\end{proof}

It is easy to construct the graph $\Gamma\subset\mathbb{R}^d$ satisfying 
\eqref{decomp}-\eqref{conditions} and \eqref{exact_formula}. For example, one 
can proceed as follows.
Let $\Gamma_0$ be an arbitrary $\mathbb{Z}^n$-periodic metric graph, 
$e_1,\dots,e_n$ be vectors producing an action of $\mathbb{Z}^n$ on $\Gamma_0$ 
(i.e., \eqref{periodicity} holds). 
We denote by $Y_0$ its period cell. Let $v_1,\dots,v_m$ be arbitrary points belonging to $\overset{\circ}{Y_0}$.
Let $Y_j$, $j=1,\dots,m$ be arbitrary compact graphs satisfying
$Y_i\cap Y_j=\varnothing$ as $i\not= j$ and $Y_j\cap  \Gamma_0= \{v_j\}$.
We denote $$Y_{ij}=Y_j+\suml_{k=1}^n i_k e_k,\ i=(i_1,\dots,i_n)\in\mathbb{Z}^n$$ 
and, finally,
$$\Gamma= 
\Gamma_0\cupl\left(\cupl_{i\in\mathbb{Z}^n}\cupl_{j=1}^m Y_{ij}\right).$$
The graph $\Gamma$ is presented on Figure \ref{fig2} (here the graph in $\mathbb{Z}$-periodic, $m=2$).
The set
$$Y:=\cupl_{j=0}^m Y_j$$
is a period cell of $\Gamma$. It is easy to see that the sets $Y_j$ satisfy 
conditions \eqref{conditions}. Obviously, they can be chosen in such a way that \eqref{exact_formula} holds -- the simplest way is to take $$Y_j=\left\{\text{single edge of the length }l_j\text{ defined by formula \eqref{exact_formula}}\right\}.$$

\section*{Acknowledgements}

The authors express their gratitude to Prof. Pavel Exner for fruitful discussion on the results.
The work of D.B. is supported by Czech Science Foundation (GACR), the project 14-02476S ``Variations, geometry and physics'', by the project ``Support of Research in the
Moravian-Silesian Region 2013'' and by the University of Ostrava.
A.K. is grateful for hospitality extended to him during several visits to the Department of Mathematics of University of Ostrava where a part of this work was done.

\end{document}